\title{\bf Uniqueness of Banach space valued graphons}
\author{\sc D\'avid Kunszenti-Kov\'acs\\
\rm MTA Alfr\'ed R\'enyi Institute of Mathematics, Budapest,
Hungary\footnote{The research leading to these results has received funding from the European Research Council under the European Union's Seventh Framework Programme (FP7/2007-2013) / ERC grant agreement $\mr{n}^\circ$617747, and from the MTA R\'enyi Institute Lend\"ulet Limits of Structures Research Group.}
\\[1cm]}

\documentclass[11pt,a4paper]{article}

 \usepackage{hyperref}
\usepackage{enumerate,color, tensor}
\usepackage{mdwlist}
\usepackage{mathrsfs}
\usepackage{url}
\usepackage[arrow, matrix, curve]{xy}
\usepackage[american]{babel}
\usepackage{paralist}               	
\usepackage[parfill]{parskip}			
\usepackage{a4wide}
\usepackage{amsmath,amsthm}
\usepackage{amssymb}
\usepackage[T1]{fontenc}
\usepackage[bottom]{footmisc}
\usepackage{dsfont}

\frenchspacing
\parindent0cm

\newcommand{\mc}{\mathcal}
\newcommand{\ms}{\mathscr}

\newcommand{\mf}{\mathbf}
\newcommand{\mb}{\mathbb}
\newcommand{\mr}{\mathrm}

\def \ph {\varphi}
\def \lin {\operatorname{lin}}

\theoremstyle{plain}
\newtheorem*{thm*}{Theorem}
\newtheorem{thm}{Theorem}
\newtheorem{lemma}[thm]{Lemma}
\newtheorem{prop}[thm]{Proposition}
\newtheorem{cor}[thm]{Corollary}
\theoremstyle{definition}
\newtheorem{definition}[thm]{Definition}

\newtheorem{remark}[thm]{Remark}

\date{\today\\
\small Mathematics Subject Classification: 46G10, and 28B05, 05C60}

\begin{document}		

\maketitle

\begin{abstract}
A Banach space valued graphon is a function $W:(\Omega, \mc{A},\pi)^2\to\mc{Z}$ from a probability space to a Banach space with a separable predual, measurable in a suitable sense, and lying in appropriate $L^p$-spaces. 
As such we may consider $W(x,y)$ as a two-variable random element of the Banach space.
A two-dimensional analogue of moments can be defined with the help of graphs and weak-* evaluations, and a natural question that then arises is whether these generalized moments determine the function $W$ uniquely -- up to measure preserving transformations.

\noindent The main motivation comes from the theory of multigraph limits, where these graphons arise as the natural limit objects for convergence in a generalized homomorphism sense.\\
\noindent Our main result is that this holds true under some Carleman-type condition, but fails in general even with $\mc{Z}=\mb{R}$, for reasons related to the classical moment-problem. In particular, limits of multigraph sequences are uniquely determined - up to measure preserving transformations - whenever the tails of the edge-distributions stay small enough.

\end{abstract}

\maketitle

\section{Introduction}

Moment determinacy deals with the question of whether a given type of probability measure is uniquely determined by its moments. In the classical settings, the theory is rich and well understood. For instance, if the probability measure lives on a bounded interval (Hausdorff problem), then knowledge of the moments is enough to recover the measure. The same holds true for the vector-valued version, where the measure is supported in a bounded domain of $\mb{R}^k$ for some finite $k$. The notion of moments has to be slightly adapted though: to guarantee uniqueness, we need \emph{mixed moments}, i.e., the expectations of $\prod_{j=1}^k X_j^{\alpha_j}$, where $X_j$ is the $j$-th coordinate of the random vector ($1\leq j\leq k$), and the $\alpha_j$-s are nonnegative integers.\\
However, if the support is unbounded (cf. Stieltjes and Hamburger problems), there is no general positive or negative answer to moment determinacy, and additional conditions (Carleman, Krein, etc.) are needed to prove or disprove uniqueness, see e.g. the monograph \cite{Akh} by Akhiezer.

In a somewhat more general setting, given a measurable function $f:[0,1]\rightarrow\mb{R}$, we have an induced probability measure on $\mb{R}$. Knowing the moments of this probability measure, we may wish to recover $f$ itself. Clearly this is not possible, but if the measure is moment determinate, we may still recover a great deal of information about $f$. For instance, we may find a canonical monotone increasing representation $f':[0,1]\rightarrow\mb{R}$, where $f'$ is determined up to a null-set. Also, we may find measure preserving transformations of $[0,1]$ that transform $f$ and $f'$ into a.e. equal functions. A similar result holds for $f$ and $f'$ taking its values in $\mb{R}^k$, where the monotone reordering is extended to the lexicographic ordering.

In a recent paper by Borgs, Chayes and Lov\'asz (\cite{BCL}), the authors considered a variant of the above Hausdorff question, involving an extra dimension in the domain of $f$. Namely, they investigated bounded symmetric measurable functions $f:[0,1]^2\to \mb{R}$. Such two-variable functions also induce a probability measure on $[0,1]$, but the structure of the domain means that there is an added spatial correlation in the function values. They proved that with an appropriate notion of generalized moments that are adapted to the extra spatial dimension, all such functions are uniquely determined by their generalized moments, up to measure preserving transformation of the variables. Note however, that there unfortunately is no canonical reordering of the interval that would yield a "monotone" function in this two-variable setting.\\
The motivation for studying such functions comes from the theory of limits of simple dense graphs, developed by Borgs, Chayes, Lov\'asz, S\'os and Vesztergombi
\cite{BCLSV1,BCLSV2} and Lov\'asz and Szegedy \cite{LSz1}. Symmetric measurable functions $W:[0,1]^2\to [0,1]$ represent limit objects for graph sequences under a combinatorial/probabilistic notion of convergence connected to homomorphism densities. The deep connection between analysis and limit theories of combinatorial objects is further highlighted in the paper \cite{LSz2} by Lov\'asz and Szegedy, where Szemer\'edi's Regularity Lemma is reformulated and given analytic interpretations.
We also note that the question of uniqueness in the limit theory for hypergraphs was treated by Elek and Szegedy in \cite{ESz}, but their methods were of a fundamentally different nature, making use of ultraproducts.\\
Further developments in this field have led to the investigation of limits of multigraphs with no bound on the number of multiple edges between nodes, and more generally to limits of decorated graphs. For the multigraph setting, this at a first glance simply corresponds to passing from bounded to unbounded functions.
But it turns out that for combinatorial reasons one expects the limit functions to take measures as their values rather than simply a real number. In \cite{KKLSz}, Lov\'asz, Szegedy and the author developed a general functional analytic framework that allows one to handle the various possible combinatorial interpretations opened up by the multigraph/decorated graph setting and compare the corresponding convergence notions.

The limit objects/graphons this generalized setting leads to are symmetric, weak-* measurable functions $W:[0,1]^2\to\mc{Z}$, where $\mc{Z}$ is a Banach space with separable predual (typically a space of measures that depends on the specific combinatorial interpretation(s) studied). As such they are two-variable random elements of said Banach space. The \emph{homomorphism densities} that are used to define convergence are integrals of products of weak-* evaluations of the graphon (cf. Definition \ref{def:density}). These will be the two-dimensional generalizations of moments that are adapted to the added structure of the domain of the graphons.
For technical reasons, we shall not restrict ourselves to graphons $W$ with domain $[0,1]^2$, but rather more generally consider domains that are the product of a probability measure space with itself, $(\Omega,\mc{A},\pi)^2$.

After one defines a class of objects that is rich enough to capture the whole limit theory, an important question still remains, namely whether the class is too big or not. This was partly answered in \cite{KKLSz}, where it was shown that every graphon is a limit of a sequence of decorated graphs.
Our aim is to address the remaining part of the question: how much redundancy is there in the space of graphons? In other words, given two graphons, under what conditions do they represent the same limit?\\
The paper follows the approach of \cite{BCL}, building on and refining its ideas and proofs and combining them with functional analytic methods related to weak-* integrable functions to extend the results to a much more general setting that also includes limits of multigraphs with unbounded edge multiplicities. We show that if the generalized moments of a graphon satisfy a Carleman-type condition, then, similarly to the one-variable case, the graphon is uniquely determined, up to measure preserving transformations of the underlying space $(\Omega,\mc{A},\pi)$, see (Theorem \ref{thm:main}). In particular, bounded Banach space valued graphons are always moment determinate.\\
However, as for the classical moment problems, one may not forgo some type of bounds on the moments to guarantee uniqueness, and using moment indeterminate measures on $\mb{N}$, one can construct graphons the are not isomorphic in any sense, but have identical generalized moments (cf. Section \ref{Section:ex}).

\section{Preliminaries}

As mentioned in the introduction, we are interested in functions in two variables taking values in a Banach space $\mc{Z}$, and a corresponding notion of moments. This involves taking integrals, but there is no unique "natural" integral notion in Banach spaces.

 The Bochner, or strong integral, corresponds to integrability "in norm", and is defined as a limit of integrals of simple functions. The Pettis integral, or weak integral, uses duality to reduce integrability of a $\mc{Z}$-valued function to that of real valued ones through weak evaluations. Finally, if $\mc{Z}$ possesses a predual, weak-* integrability can also be defined in a similar way, as shall be done below. Of the three integral notions, weak-* integrability is the weakest property (provided it exists), and Bochner integrability the strongest, though some or all properties coincide under certain conditions on the Banach space $\mc{Z}$ (e.g. separability).

 In this paper we are interested in the largest class, that of weak-* integrable functions, as these are the ones that arise naturally as limits of multigraph sequences (see \cite{KKLSz}).
After introducing this class of functions and some of its properties, we shall turn our attention to the combinatorial structures that allow us to define a notion of moments adapted to the two-variable setting and the added geometric structure that comes with it.

\subsection{Weak-* integrable functions}

Let $\Phi$ be a separable Banach space, and let $\mc{Z}$ denote its dual. The elements of $\Phi$ act on $\mc{Z}$ as bounded linear functionals in the canonical way. Let further $\Psi\subset \Phi$ be a countable dense subset.

\begin{definition}
 Let $(\Omega, \mc{A},\pi)$ be a probability space. A function $W:(\Omega, \mc{A},\pi)\to\mc{Z}$ is called \emph{weak-* measurable} if for any $\varphi\in\Phi$, the function $\langle \varphi,W\rangle$ is measurable.\\
The weak-* measurable function $W$ is called \emph{weak-* scalarly integrable}, if for any $\varphi\in\Phi$, the function $\langle \varphi,W\rangle$ lies in $L^1(\pi)$.\\
The weak-* measurable function $W$ is called \emph{weak-* integrable} (or \emph{Gelfand integrable}), if there exists a mapping $\mu_W:\mc{A}\to \mc{Z}$ such that for any $A\in\mc{A}$ and $\ph\in\Phi$ we have
\[
\int_A \langle \ph,W\rangle=\ph(\mu(A)).
\]
\end{definition}

\begin{remark}
Note that the standard definition of weak-* measurability only requires that the weak-* evaluations be measurable with respect to the completion of the measure $\pi$. In this paper, however, we shall need to differentiate between functions that are measurable with respect to a measure, and those that are measurable only with respect to its completion.
\end{remark}

Clearly a weak-* integrable function is also weak-* scalarly integrable. The following classical result shows that the converse is also true. We include its proof for the readers' convenience.

\begin{prop}\label{prop:Gelfand}
Each weak-* scalarly integrable function $W:(\Omega, \mc{A},\pi)\to\mc{Z}$ is weak-* integrable.
\end{prop}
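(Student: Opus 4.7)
The plan is to fix $A \in \mc{A}$ and define the linear functional $T_A : \Phi \to \mb{R}$ by
\[
T_A(\ph) := \int_A \langle \ph, W\rangle \, d\pi,
\]
which is well-defined by the assumption of weak-* scalar integrability. If I can show that $T_A$ is a bounded linear functional on $\Phi$, then by definition of $\mc{Z} = \Phi^*$, there exists a unique element $\mu_W(A) \in \mc{Z}$ such that $T_A(\ph) = \ph(\mu_W(A))$ for every $\ph \in \Phi$, which is precisely what we need. Linearity of $T_A$ follows immediately from linearity of the pairing and of the integral.

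The main obstacle is boundedness of $T_A$, and to get it uniformly I would prove instead that the global operator
\[
T : \Phi \to L^1(\pi), \qquad T(\ph) := \langle \ph, W\rangle,
\]
is bounded. This is where I would invoke the closed graph theorem: $\Phi$ and $L^1(\pi)$ are both Banach spaces, and $T$ is linear, so it suffices to verify that its graph is closed. Suppose $\ph_n \to \ph$ in $\Phi$ and $T(\ph_n) \to g$ in $L^1(\pi)$. For each fixed $x \in \Omega$, the element $W(x) \in \mc{Z} = \Phi^*$ is a bounded linear functional on $\Phi$, so $\langle \ph_n, W(x)\rangle \to \langle \ph, W(x)\rangle$ pointwise, i.e.\ $T(\ph_n) \to T(\ph)$ pointwise on $\Omega$. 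On the other hand, $L^1$-convergence yields a subsequence of $T(\ph_n)$ converging to $g$ almost everywhere. The two limits must coincide a.e., giving $g = T(\ph)$ in $L^1(\pi)$, which closes the graph.

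Once $T$ is bounded with operator norm $\|T\|$, the estimate
\[
|T_A(\ph)| \le \int_\Omega |\langle \ph, W\rangle| \, d\pi = \|T(\ph)\|_{L^1} \le \|T\| \cdot \|\ph\|
\]
shows $T_A \in \Phi^*$ with norm at most $\|T\|$. Defining $\mu_W(A)$ as the unique element of $\mc{Z}$ representing $T_A$ then gives a map $\mu_W : \mc{A} \to \mc{Z}$ satisfying the defining identity $\int_A \langle \ph, W\rangle \, d\pi = \ph(\mu_W(A))$ for all $A \in \mc{A}$ and $\ph \in \Phi$, as required. No further verification of $\mu_W$ (such as countable additivity) is requested by the statement, although it would follow routinely from dominated convergence applied scalar-by-scalar and the fact that $\mc{Z}$ separates points of $\Phi$.
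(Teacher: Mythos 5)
Your proposal is correct and takes essentially the same approach as the paper: boundedness of the scalarization into $L^1(\pi)$ via the closed graph theorem (using a.e.-convergent subsequences against pointwise convergence), followed by representation in $\mc{Z}=\Phi^*$. The only cosmetic difference is that you apply the closed graph theorem once to the global map $T(\ph)=\langle\ph,W\rangle$ and then bound every $T_A$ at once, whereas the paper works with $\ms{W}_A(\ph)=\mathds{1}_A\langle\ph,W\rangle$ for each $A$; the argument is the same either way.
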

\begin{proof}
 For a given $A\in\mc{A}$, consider the linear map $\ms{W}_A:\Phi\to L^1(\Omega,\mc{A},\pi)$ given by $\ms{W}_A(\ph):=\mathds{1}_A\cdot\langle\ph,W\rangle$. Note that this map has a closed graph.\\
 Indeed, given any convergent sequence $\ph_n\to\ph\in\Phi$ with $\ms{W}_A(\ph_n)\to f\in L^1(\Omega,\mc{A},\pi)$, we can find a subsequence such that $\ms{W}_A(\ph_{n_k})$ converges almost everywhere to $f$. But pointwise $\ms{W}_A(\ph_n)$ converges to $\ms{W}_A(\ph)$, and so $f=\ms{W}_A(\ph)$ in $L^1(\Omega,\mc{A},\pi)$.\\
Thus by the Closed Graph Theorem $\ms{W}_A$ is bounded, and so
\[
\left|\int_A\langle\ph,W\rangle\,d\pi \right|\leq \int_A \left|\langle\ph,W\rangle\right|\,d\pi=\|\ms{W}_A(\ph)\|\leq
\|\ms{W}_A\|\cdot \|\ph\|,
\]
which means that the map $\ph\mapsto\int_A\langle\ph,W\rangle\,d\pi$ is a continuous linear functional on $\Phi$. Therefore there exists a representing element $\mu_W(A)\in\mc{Z}$, completing the proof.
\end{proof}

The next result shows that the existence of a Radon-Nikodym derivative extends to the setting of weak-* integrals, and it is a variant of a theorem due to Rybakov (\cite[Thm. 2]{Rybakov}). In his paper Rybakov assumes the underlying measure space to be complete, as the proof makes use of lifting on $L^\infty(\Omega,\mc{A})$. In our setting, however, the predual $\Phi$ is separable, and therefore lifting can be avoided through a different approach, and the assertion holds even when the measure space is not complete.

\begin{prop}\label{prop:RN}
Let $(\Omega,\mc{A},\pi)$ be a probability space, and suppose that the vector-valued measure $\mu:\mc{A}\to\mc{Z}$ is of $\sigma$-finite variation, and $\mu\ll\pi$. Then there exists a weak-* integrable function $W:\Omega\to\mc{Z}$ such that
\[
\langle \ph, \mu(A)\rangle =\int_A \langle \ph,W\rangle\,d\pi
\]
for every $\ph\in\Phi$ and $A\in\mc{A}$.
\end{prop}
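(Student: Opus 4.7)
The strategy is to build $W$ pointwise: for $\pi$-almost every $x$, we shall specify $W(x)\in\mc{Z}=\Phi^*$ by first defining its action on a countable dense subset of $\Phi$ and then extending by continuity. Separability of $\Phi$ will replace the lifting argument used in Rybakov's original proof.

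The preparatory ingredients are the classical scalar Radon-Nikodym derivatives. For each $\ph\in\Phi$ the signed set function $\nu_\ph:=\langle\ph,\mu(\cdot)\rangle$ is countably additive and $\ll\pi$; by the Bartle--Dunford--Schwartz theorem the semi-variation $\|\mu\|(\Omega)$ is finite, so $|\nu_\ph|(\Omega)\leq\|\ph\|\cdot\|\mu\|(\Omega)$ is finite as well, producing a density $f_\ph\in L^1(\pi)$ with $\int_A f_\ph\,d\pi=\langle\ph,\mu(A)\rangle$. Since $|\nu_\ph|\leq\|\ph\|\,|\mu|$ as measures and $|\mu|$ admits a $\pi$-a.e.\ finite Radon-Nikodym derivative $g:=d|\mu|/d\pi$ (because $|\mu|$ is $\sigma$-finite and $\ll\pi$), one also has $|f_\ph|\leq\|\ph\|\,g$ a.e.

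Next, fix a countable $\mb{Q}$-linear dense subset $\Psi_0\subset\Phi$ and, for each $\psi\in\Psi_0$, a specific measurable representative of $f_\psi$. The identities $f_{\psi+\psi'}=f_\psi+f_{\psi'}$, $f_{q\psi}=qf_\psi$ (for $q\in\mb{Q}$) and the bound $|f_\psi|\leq\|\psi\|g$ each hold off a $\pi$-null set, so their countably many exceptional sets together with $\{g=\infty\}$ have a conull complement $\Omega'$. For $x\in\Omega'$ the map $\psi\mapsto f_\psi(x)$ is a $\mb{Q}$-linear functional on $\Psi_0$ bounded by $g(x)\|\psi\|$, so it extends uniquely by density to a continuous linear functional on $\Phi$, that is, to an element $W(x)\in\mc{Z}$ with $\|W(x)\|\leq g(x)$. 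Set $W(x):=0$ for $x\notin\Omega'$.

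It remains to verify weak-* measurability and the integral identity for arbitrary $\ph\in\Phi$. Choosing $\psi_n\in\Psi_0$ with $\psi_n\to\ph$, the pointwise identity $\langle\ph,W(x)\rangle=\lim_n f_{\psi_n}(x)$ on $\Omega'$ provides measurability. Meanwhile $\ph\mapsto f_\ph$ is continuous from $\Phi$ to $L^1(\pi)$ (with norm bounded by $\|\mu\|(\Omega)$), so $f_{\psi_n}\to f_\ph$ in $L^1$, hence in measure, forcing $\langle\ph,W\rangle=f_\ph$ a.e.\ and therefore $\int_A\langle\ph,W\rangle\,d\pi=\nu_\ph(A)=\langle\ph,\mu(A)\rangle$. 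The main obstacle compared to Rybakov's original setting is that of collapsing the uncountably many $\ph$-indexed exceptional sets into a single $\pi$-null set when defining $W(x)$, and this is precisely what the separability of $\Phi$ enables.
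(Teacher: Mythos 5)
Your proof is correct, and it takes a genuinely cleaner route than the paper's. Both arguments share the central device: fix a countable $\mb{Q}$-linear dense subset of $\Phi$, collect the countably many exceptional null sets for $\mb{Q}$-linearity and for the pointwise bound, and on the conull complement extend $\psi\mapsto f_\psi(x)$ to an element of $\mc{Z}$. Where you diverge is in how the pointwise bound is obtained. The paper runs a two-stage argument: it first treats the case where a uniform bound $\|\mu(A)\|\leq c\,\pi(A)$ holds, and then in the general $\sigma$-finite case it defines sets $A_n$ on which the already-constructed scalar derivatives $w_\psi$ are uniformly bounded by $n\|\psi\|$, reducing to the first case on each $A_n$; the crux is a bespoke contradiction argument showing that the leftover $S=\Omega\setminus\bigcup A_n$ is a $\pi$-null set, which is exactly where the $\sigma$-finiteness of $|\mu|$ is spent. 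You instead spend the $\sigma$-finiteness up front by forming $g:=d|\mu|/d\pi$ (finite $\pi$-a.e.) and using the monotonicity of scalar Radon--Nikodym derivation to get $|f_\psi|\leq\|\psi\|\,g$ a.e. This unifies the paper's two cases into one and replaces its exhaustion-plus-contradiction step by a one-line appeal to the classical theorem; it also makes the role of the hypothesis transparent (it is precisely what guarantees $g<\infty$ a.e.). One small remark: invoking the Bartle--Dunford--Schwartz boundedness theorem is overkill and tacitly assumes $\mu$ is norm countably additive. You do not actually need finite semivariation to get $|\nu_\ph|(\Omega)<\infty$ --- any countably additive real-valued signed measure on a $\sigma$-algebra already has finite total variation --- and the $L^1$-continuity of $\ph\mapsto f_\ph$ used at the end can be obtained by the same closed-graph argument the paper uses in Proposition \ref{prop:Gelfand}, which only requires each $\ph\circ\mu$ to be a scalar measure. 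With that substitution your argument rests on exactly the same hypotheses as the paper's.
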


\begin{proof}
First, assume that there exists a $c>0$ such that $\|\nu(A)\| \leq c\mu(A)$, for every $A\in\mc{A}$.
According to the classical Radon-Nikodym theorem for each
$\ph\in\Phi$ there exists a function $w'_\ph\in L^1(\Omega,\mc{A},\pi)$ such that for all $A\in\mc{A}$ we have

\begin{equation*}\label{eqn:RN'}
\langle \ph, \mu(A)\rangle = \int_A w'_\ph \,d\pi
\end{equation*}

Clearly $|w'_\ph| \leq c \|\ph\|$ a.e., for each $\ph$ separately. In the general case, this is the point where a lifting on $L^\infty(\Omega,\mc{A},\pi)$ would be used to assemble all these derivatives into a single $\mc{Z}$-valued function.

Instead, now let $\Psi':=\lin_{\mb{Q}}\Psi$, and consider an enumeration $\psi_0,\psi_1,\psi_2,\ldots$ of the countable set $\Psi'$, where $\psi_0=0$. Let us recursively do the following. Let $w_{\psi_0}\equiv 0$ and $w_{\psi_1}:=w'_{\psi_1}$, and for each $n\geq 2$, if $\psi_n$ is not in the linear hull of the previous $\psi_i$-s, then let $w_{\psi_n}:=w'_{\psi_n}$. If on the other hand $\psi_n=\sum_{i=1}^{n-1}a_i\psi_i$ for some $a_1,a_2,\ldots,a_{n-1}\in\mb{R}$, then let $w_{\psi_n}:=\sum_{i=1}^{n-1}a_iw_{\psi_i}$. Since Radon-Nikodym derivation is linear, this is well-defined, and
\begin{equation*}\label{eqn:RN}
\langle \psi_n, \mu(A)\rangle = \int_A w_{\psi_n} \,d\pi
\end{equation*}
for all $A\in\mc{A}$ and $1\leq n$. Also, this can be linearly extended to all $\psi\in\lin_{\mb{R}}\Psi=:\Psi''$.\\
Since $\Psi'\subset\Phi$ is countable, there exists a set $N\in\mc{A}$ with $\pi(N)=0$ such that $|w_\psi| \leq c \|\psi\|$ for all $\omega\in\Omega\backslash N$ and $\psi\in\Psi'$, and then by construction actually for all $\psi\in\Psi''$.
For each $\omega\in\Omega\backslash N$ define the functional $W'_\omega:\Psi''\to\mb{R}$ by
\[
W'_\omega(\psi):=w_\psi(\omega).
\]
Clearly $W'_\omega$ is linear and bounded by $c$ on $\Psi''$, and hence has a unique bounded extension $W_\omega\in\mc{Z}$ to $\Phi$.
Now we can define a function $W:(\Omega,\mc{A},\pi)\to\mc{Z}$ through
\begin{equation*}
W(\omega):=\left\{
\begin{array}{lr}
W_\omega & \mbox{ if } \omega\in\Omega\backslash N;\\
0 & \mbox{otherwise}.
\end{array}
\right.
\end{equation*}
Notice that $W$ is automatically $\mc{A}$-measurable, and we have $\|W(\omega)\|\leq c$ for all $\omega\in\Omega$. Using again that $\overline{\Psi''}=\Phi$, we may thus conclude that
\[
\langle \ph, \mu(A)\rangle =\int_A \langle \ph,W\rangle\,d\pi
\]
for every $\ph\in\Phi$ and $A\in\mc{A}$.\\
To finish the proof in the general case, we need that $\Omega$ can be written as the union of a set of measure zero, and sets $A_n\in\mc{A}$ ($n\in\mb{N}^+$) such that for each $n$, we have $\|\mu(A)\|\leq n\pi(A)$ for all $A_n\supset A\in\mc{A}$.
Since $\mu$ is of $\sigma$-finite variation, the Radon-Nikodym theorem still applies to the measures $\ph\circ\mu$, $\ph\in\Phi$. Define the functions $w_\psi$ ($\psi\in\Psi'$) as above.
For $n\in\mb{N}^+$, let $A_n:=\left\{\omega\in\Omega\Bigl|\left|w_\psi(\omega)\right|\leq n\|\psi\|\,\forall \psi\in\Psi'\Bigr.\right\}$. Then, since $\Psi'$ is countable and generates the norm on $\mc{Z}$, each $A_n$ is measurable and satisfies $\|\mu(A)\|\leq n\pi(A)$ for all $A_n\supset A\in\mc{A}$. It thus only remains to be shown that $\Omega\backslash\cup_{n\in\mb{N}^+}A_n=:S$ is a set of measure zero.

Since $\mu$ is of $\sigma$-finite variation, we can write $S=\bigcup_{i=1}^\infty S_i$ with $S_i\in\mc{A}$, and $\mu$ being of finite variation on each $S_i$. Suppose that there exists an $S_k$ with $d:=\pi(S_k)>0$, and let then $s$ be the variation of $\mu$ on $S_k$, and $q:=\left\lfloor\frac{s}{d}\right\rfloor+1$. For $j\geq 1$, define the sets $S^+_{k,j}\in\mc{A}$ and $S^-_{k,j}\in\mc{A}$ through
\[
S^+_{k,j}:=\left\{\omega\in S_k\Bigl|
\left|w_{\psi_a}(\omega)\right|< q\|\psi_a\|\,\forall\, 1\leq a<j, \mbox{ and } w_{\psi_j}(\omega)\geq q\|\psi_j\|
\Bigr.\right\},
\]
and
\[
S^-_{k,j}:=\left\{\omega\in S_k\Bigl|
\left|w_{\psi_a}(\omega)\right|< q\|\psi_a\|\,\forall\, 1\leq a<j, \mbox{ and } w_{\psi_j}(\omega)\leq -q\|\psi_j\|
\Bigr.\right\}.
\]
Then $\|\mu(S^\pm_{k,j})\|\geq q \pi(S^\pm_{k,j})$, and the sets $(S^\pm_{k,j})_{j\in\mb{N}^+}$ form a partition of $S_k$, hence
\[
s\geq \sum_{j=1}^\infty \|\mu(S^+_{k,j})\|+\sum_{j=1}^\infty \|\mu(S^-_{k,j})\|\geq q \sum_{j=1}^\infty\pi(S^+_{k,j})+\pi(S^-_{k,j})=q\pi(S_k)=qd>s,
\]
leading to a contradiction. Thus $S$ is the countable union of sets of measure zero, and the proof is complete.

\end{proof}

The following lemma is an easy corollary.

\begin{lemma}\label{le:conditional}
Let $(\Omega,\mc{A},\pi)$ be a probability space, and $W:(\Omega,\mc{A},\pi)\to\mc{Z}$ a weak-* integrable function with $\|W\|\in L^1(\pi)$. Let further $\mc{A}'\subset\mc{A}$ be a sub-$\sigma$-algebra. Then there exists a weak-* integrable function $W':(\Omega,\mc{A}',\pi|_{\mc{A}'})\to\mc{Z}$ such that
\[
\langle \ph, \mu(A')\rangle =\int_{A'} \langle \ph,W\rangle\,d\pi
\]
for every $\ph\in\Phi$ and $A'\in\mc{A}'$. If $W'_1$ and $W'_2$ are two such functions, then $W_1'=W'_2$ almost everywhere with respect to $\pi|_{\mc{A}'}$.\\
If for some $1\leq p<\infty$ we have $\|W\|\in L^p$, then also $\|W'\|\in L^p$ with $\|W\|_p\geq\|W'\|_p$.
\end{lemma}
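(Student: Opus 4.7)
The plan is to apply Proposition \ref{prop:RN} to the restriction of the Gelfand integral of $W$ to the sub-$\sigma$-algebra $\mc{A}'$. Weak-* integrability of $W$ yields a vector measure $\mu_W:\mc{A}\to\mc{Z}$, and the assumption $\|W\|\in L^1(\pi)$ gives the uniform bound
\[
\|\mu_W(A)\|=\sup_{\|\ph\|\leq 1}\left|\int_A\langle\ph,W\rangle\,d\pi\right|\leq \int_A\|W\|\,d\pi,
\]
so $\mu_W$ is of finite (hence $\sigma$-finite) variation and $\mu_W\ll\pi$. Restricting $\mu_W$ to $\mc{A}'$ therefore produces a $\mc{Z}$-valued measure on $(\Omega,\mc{A}')$ that is absolutely continuous with respect to $\pi|_{\mc{A}'}$ and still of $\sigma$-finite variation. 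Proposition \ref{prop:RN} then delivers a weak-* integrable $W':(\Omega,\mc{A}',\pi|_{\mc{A}'})\to\mc{Z}$ whose associated vector measure coincides with $\mu_W|_{\mc{A}'}$, which is precisely the identity demanded in the statement.

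Uniqueness is a soft consequence of the separability of $\Phi$. If $W_1'$ and $W_2'$ both work, then for each fixed $\ph\in\Phi$ the function $\langle \ph, W_1'-W_2'\rangle$ has vanishing integral over every $A'\in\mc{A}'$, so it is zero $\pi|_{\mc{A}'}$-a.e. Running this argument over each element of the countable dense $\Psi\subset\Phi$ and taking the union of the exceptional null sets gives a single $\pi|_{\mc{A}'}$-null set off which $\langle \psi, W_1'(\omega)-W_2'(\omega)\rangle=0$ for every $\psi\in\Psi$ simultaneously; density of $\Psi$ in $\Phi$ then forces $W_1'(\omega)=W_2'(\omega)$ as elements of $\mc{Z}$.

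For the $L^p$ estimate, the key point is that for each individual $\ph\in\Phi$ the real-valued function $\langle \ph, W'\rangle$ is, by the defining identity, the classical conditional expectation $\mathbb{E}[\langle \ph, W\rangle\mid\mc{A}']$. Applying this to each $\psi\in\Psi$ of norm $\leq 1$, using $|\langle \psi, W\rangle|\leq \|W\|$, and unioning the countably many exceptional null sets, one obtains
\[
\|W'(\omega)\|=\sup_{\psi\in\Psi,\,\|\psi\|\leq 1}|\langle \psi, W'(\omega)\rangle|\leq \mathbb{E}[\|W\|\mid\mc{A}'](\omega)
\]
outside a null set, where the first equality uses that $\Psi$ is norming on $\mc{Z}=\Phi^*$. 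Since conditional expectation is a contraction on $L^p$, this pointwise bound at once yields $\|W'\|_p\leq \|\,\|W\|\,\|_p=\|W\|_p$.

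The arguments are short, but the one genuine care-point is the last one: writing $\|W'(\omega)\|$ as a supremum over a countable family so as to legitimately convert the scalar conditional-expectation inequalities into a single pointwise-a.e. inequality for the $\mc{Z}$-valued function. Separability of $\Phi$ and the presence of the countable dense $\Psi$ make this harmless, but it is the step where the setting differs materially from the purely scalar theory.
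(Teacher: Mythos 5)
Your proposal is correct and follows essentially the same route as the paper: restrict the Gelfand vector measure of $W$ to $\mc{A}'$, invoke Proposition \ref{prop:RN}, identify $\langle\psi,W'\rangle$ with the classical conditional expectation $E(\langle\psi,W\rangle\mid\mc{A}')$ for $\psi$ in a countable dense $\Psi$, and then use separability to get uniqueness and the $L^p$ contraction. Your treatment of the $L^p$ bound is a bit more explicit about the pointwise supremum over $\Psi$ (one should read the supremum as over $\psi/\|\psi\|$ rather than over $\|\psi\|\leq 1$, since $\Psi$ need not be closed under scaling), but this is the same underlying argument the paper compresses into ``Radon--Nikodym derivation is order-preserving, hence a contraction.''
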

\begin{proof}
Let the vector-valued measure $\mu:\mc{A}'\to\mc{Z}$ be defined through the weak-* integral $\mu(A'):=\int_{A'} W$. Then $\mu$ is clearly absolutely continuous with respect to $\pi|_{\mc{A}'}$ and has $\sigma$-finite variation. Hence by Proposition \ref{prop:RN}, there exists a weak-* integrable function $W':(\Omega,\mc{A}',\pi|_{\mc{A}'})\to\mc{Z}$ satisfying the required equality. If $W'_1$ and $W'_2$ are two such functions, then for each $\psi\in\Psi$ we have 
\[
\langle \psi,W'_1\rangle=E(\langle \psi,W\rangle\left|\mc{A}'\right.)=\langle \psi,W'_2\rangle
\]
$\pi|_{\mc{A}'}$-almost everywhere. Since $\Psi$ is countable and separates $\mc{Z}$, the assertion follows.\\
The inequality between the norms follows from the fact that $\left|\langle \ph,W(\cdot)\rangle\right|\leq\|W(\cdot)\|\cdot\|\ph\|$, and Radon-Nikodym derivation is order-preserving, hence a contraction on every $L^p$ space.
\end{proof}

This allows us to extend the notion of conditional expectation to weak-* integrable functions with values in $\mc{Z}$.

\begin{definition}
Let $(\Omega,\mc{A},\pi)$ be a probability space, and $W:(\Omega,\mc{A},\pi)\to\mc{Z}$ a weak-* integrable function. Let further $\mc{A}'\subset\mc{A}$ be a sub-$\sigma$-algebra. Then the $\pi|_{\mc{A}'}$-almost everywhere unique function $W'$ given in Lemma \ref{le:conditional} is called the \emph{conditional expectation} of $W$ with respect to the $\sigma$-algebra $\mc{A}'$, and will be denoted by $E(W\left|\mc{A}'\right.)$.\\
The function $W$ is said to be \emph{almost} $\mc{A}'$-\emph{measurable}, if $W=E(W\left|\mc{A}'\right.)$ holds $\pi$-almost everywhere.
\end{definition}

\begin{definition}
A symmetric  function $W:(\Omega, \mc{A},\pi)^2\to\mc{Z}$ is called
a $\mc{Z}$-\emph{graphon} if
it is weak-* measurable
with respect to the completion $\overline{\mc{A}\times\mc{A}}$ of the underlying $\sigma$-algebra,
and the function $(x,y)\mapsto \|W(x,y)\|_\mc{Z}$
lies in $\mc{L}_{(\Omega, \mc{A},\pi)}:=\bigcap_{1\leq p<\infty} L^p\left(\overline{(\Omega, \mc{A},\pi)^2}\right)$. Note that this function is measurable with respect to the completed $\sigma$-algebra, since $\Phi$ is separable, and for a countable dense subset $\Psi\subset\Phi$ we have
\[
\|W(x,y)\|_\mc{Z}=\sup_{\psi\in\Psi\backslash\{0\}}\frac{|\langle f,W(x,y)\rangle|}{\|\psi\|_\Phi}.
\]
Let the space of $\mc{Z}$-graphons on $(\Omega,\mc{A},\pi)^2$ be denoted by
$\mc{W}_{(\Omega,\mc{A},\pi)}$. We set
\[
\|W\|_p := \bigl\|\|W(.,.)\|_\mc{Z}\bigr\|_p.
\]
(i.e., we take the $\mc{Z}$-norm of $W(x,y)$ for every $x,y\in[0,1]$,
and then take the $L^p$-norm of the resulting function).
\end{definition}

The following notions are related to how "nice" a graphon and the underlying measure space are.

\begin{definition}
A graphon $W\in\mc{W}_{(\Omega,\mc{A},\pi)}$ is called \emph{strong} if it is also measurable with respect to the $\sigma$-algebra $\mc{A}\times\mc{A}$.\\
A graphon $W\in\mc{W}_{(\Omega,\mc{A},\pi)}$ is called \emph{Lebesguian} if the measure space $(\mc{A},\pi)$ is a standard (or Lebesgue) measure space (for a definition of standard probability spaces see e.g. \cite[Section 2.2]{Rohlin}).\\
The graphon $W\in\mc{W}_{(\Omega,\mc{A},\pi)}$ is \emph{complete} if $(\mc{A},\pi)$ is a complete measure space, and the \emph{completion} of a graphon $W\in\mc{W}_{(\Omega,\mc{A},\pi)}$ is the complete graphon $\overline{W}\in\mc{W}_{(\Omega,\overline{\mc{A}},\overline{\pi})}$ obtained through completing the measure space $(\mc{A},\pi)$.\\
Two points $x_1,x_2\in\Omega$ are called \emph{twins} with respect to the graphon $W\in\mc{W}_{(\Omega,\mc{A},\pi)}$ if $W(x_1,y)=W(x_2,y)$ for almost all $y$. The graphon $W$ is called \emph{almost twin-free} if there exists a null-set $N\subset\Omega$ such that no two points in $\Omega\backslash N$ are twins.
\end{definition}

For a $\ph\in\Phi$ and a $W\in\mc{W}_{(\Omega, \mc{A},\pi)}$ let the function $\tensor*[^\ph]W{}: \Omega^2\to\mb{R}$ be defined by
\[
\tensor*[^\ph]W{}(x,y):=\ph(W(x,y)).
\]
Note that we always have $\tensor*[^\ph]W{}\in\mc{L}_{(\Omega, \mc{A},\pi)}$.

The following lemma lets us prove weak-* measurability using only a countable dense subset of $\Phi$.

\begin{lemma}\label{le:weak*}
Let $\Psi\subset\Phi$ be a countable dense subset, and let $W:(\Omega, \mc{A},\pi)\to\mc{Z}$ be a function such that $\tensor*[^{\psi}]W{}$ is measurable for each $\psi\in\Psi$. Then $W$ is weak-* measurable.
\end{lemma}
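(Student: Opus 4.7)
The plan is to show that for an arbitrary $\ph\in\Phi$, the scalar function $\tensor*[^\ph]W{}=\langle\ph,W\rangle$ is measurable, by exhibiting it as a pointwise limit of functions that are measurable by hypothesis.

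First I would pick, given $\ph\in\Phi$, a sequence $(\psi_n)_{n\geq 1}\subset\Psi$ with $\|\psi_n-\ph\|_\Phi\to 0$, which exists by density of $\Psi$. By hypothesis each $\tensor*[^{\psi_n}]W{}$ is measurable. Then for any fixed $x\in\Omega$, since $W(x)\in\mc{Z}$, we have $\|W(x)\|_\mc{Z}<\infty$, and hence
\[
\bigl|\langle\psi_n,W(x)\rangle-\langle\ph,W(x)\rangle\bigr|=\bigl|\langle\psi_n-\ph,W(x)\rangle\bigr|\leq\|\psi_n-\ph\|_\Phi\cdot\|W(x)\|_\mc{Z}\xrightarrow[n\to\infty]{}0.
\]
Therefore $\tensor*[^{\psi_n}]W{}(x)\to\tensor*[^\ph]W{}(x)$ for every $x\in\Omega$, so $\tensor*[^\ph]W{}$ is a pointwise limit of measurable real-valued functions, hence measurable. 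Since $\ph\in\Phi$ was arbitrary, $W$ is weak-* measurable.

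There is essentially no obstacle here: the argument uses only the definition of the dual norm on $\mc{Z}$, the density of $\Psi$ in $\Phi$, and the standard fact that pointwise limits of measurable functions are measurable. The only subtle point worth flagging is that finiteness of $\|W(x)\|_\mc{Z}$ at each individual $x$ (which is automatic, as $W$ takes values in the Banach space $\mc{Z}$) is what allows the norm-controlled bound above to force pointwise convergence without any integrability or uniform boundedness hypothesis on $W$.
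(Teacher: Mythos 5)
Your proof is correct and takes essentially the same approach as the paper: pick a sequence in $\Psi$ converging in norm to $\ph$, observe that the corresponding evaluations converge pointwise, and conclude measurability as a pointwise limit of measurable functions. You simply spell out the pointwise convergence estimate explicitly, which the paper leaves implicit.
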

\begin{proof}
Since $\Psi$ is countable and dense in $\Phi$, for any $\ph\in\Phi$ there exists a sequence $(\psi_n)\subset\Psi$ that converges to $\ph$ in norm. But then $\tensor*[^{\ph}]W{}$ is the pointwise limit of the measurable functions $\tensor*[^{\psi_{n}}]W{}$, and hence itself measurable.
\end{proof}

\subsection{Decorated graphs and graph densities}

We now turn our attention to decorated graphs, and the "moments" they induce for Banach space valued graphons.

If $\mc{X}$ is any set, an {\it $\mc{X}$-decorated graph} is a graph where
every edge $ij$ is decorated by an element $X_{ij}\in\mc{X}$. An
$\mc{X}$-decorated graph will be denoted by $(G,g)$, where $G$ is a
simple graph, and $g:~E(G)\to\mc{X}$.

\begin{definition}\label{def:density}
For a $\Phi$-decorated graph $\mf{F}=(F,f)$ on $k$ vertices and a $\mc{Z}$-graphon $W$, let
\[
t(\mf{F},W):=\int\limits_{x_1,\ldots,x_k\in(\Omega, \mc{A},\pi)} \prod_{ij\in E(F)} \tensor*[^{f_{ij}}]W{}(x_i,x_j) dx_1\ldots x_k.
\]
\end{definition}
Note that since $\|W\|_\mc{Z}$ lies in all $L^p$ spaces for $1\leq p<\infty$, this integral is always finite.

Our aim is to investigate to what degree graph densities determine a $\mc{Z}$-graphon. 
Just as in the real valued case, there is an inherent indeterminacy related to the choice of the underlying measure space. We therefore recall some further definitions from measure theory, and from the extension of these notions to graphons (cf. \cite[Section 2]{BCL}).
Given a graphon $W\in\mc{W}_{(\Omega, \mc{A},\pi)}$, we may obtain a graphon with the exact same graph densities by deleting a null-set from $\Omega$. To this type of indeterminacy corresponds an equivalence relation between measure spaces. Let $(\Omega,\mc{A},\pi)$ and $(\Omega',\mc{A}',\pi')$ be two probability spaces. They are called \emph{isomorphic mod $0$} if there exist null-sets $N\subset \Omega$ and $N'\subset\Omega'$ and a bijection $\mu:\Omega\backslash N\to\Omega'\backslash N'$ such that both $\mu$ and $\mu^{-1}$ are measure preserving. The map $\mu$ itself is called an \emph{isomorphism mod $0$}.\\
Another way of obtaining a new graphon with the same graph densities is by applying a "pull-back" using a measure preserving map. Let $\eta:(\Omega'',\mc{A}'',\pi'')\to(\Omega,\mc{A},\pi)$ be a measure preserving map. The \emph{pull-back} $W'':=(W)^\eta\in\mc{W}_{(\Omega'', \mc{A}'',\pi'')}$ of the graphon $W\in\mc{W}_{(\Omega, \mc{A},\pi)}$ under $\eta$ is defined through $(W)^\eta(x,y):=W(\eta(x),\eta(y))$. Then $W''$ again clearly has the same graph densities as $W$.\\
Let $U\in\mc{W}_{(\Omega_1, \mc{A}_1,\pi_1)}$ and $V\in\mc{W}_{(\Omega_2, \mc{A}_2,\pi_2)}$ be two graphons, and suppose that we have a map $\eta:\Omega_1\to\Omega_2$ that is measure preserving from the completion $\overline{\mc{A}_1}$ into $\mc{A}_2$ and satisfies $V^\eta=U$ almost everywhere. Then we say that $\eta$ is a \emph{weak isomorphism} from $U$ to $V$.\\
These isomorphism notions can be extended to the graphons themselves. We say that the graphons $W_1\in\mc{W}_{(\Omega_1, \mc{A}_1,\pi_1)}$ and $W_2\in\mc{W}_{(\Omega_2, \mc{A}_2,\pi_2)}$ are \emph{isomorphic mod $0$} if there exists an isomorphism mod $0$ $\mu:\Omega_1\to\Omega_2$ such that $(W_2)^\mu=W_1$ almost everywhere. As a short-hand notation we write $W_1\cong W_2$.\\
We say that $W_1$ and $W_2$ are \emph{weakly isomorphic} if there exists a graphon $W_3\in\mc{W}_{(\Omega_3, \mc{A}_3,\pi_3)}$ and weak isomorphisms from each of $W_1$ and $W_2$ into $W_3$. Note that it is not immediately clear that being weakly isomorphic is an equivalence relation.

\subsection{Main result}

Using the notations and definitions introduced above, the main result of this paper can be formulated as follows.\\

\begin{thm}\label{thm:main}\leavevmode\\
\begin{enumerate}
\item Let $W\in\mc{W}_{(\Omega, \mc{A},\pi)}$ and $W'\in\mc{W}_{(\Omega', \mc{A}',\pi')}$ be almost twin-free strong Lebesguian graphons. Further assume that the $p$-norms of $W$ satisfy:
\begin{eqnarray*}
\sum_{n=1}^\infty \|W\|_{2nk}^{-k}=\infty
\end{eqnarray*}
for all $k\in\mb{N}^+$.
Then
\[
t(\mf{F},W)=t(\mf{F},W')
\]
for every $\Psi$-decorated simple graph $\mf{F}$ if and only if $W$ and $W'$ are isomorphic mod $0$.
\item  Let $W\in\mc{W}_{(\Omega, \mc{A},\pi)}$ and $W'\in\mc{W}_{(\Omega', \mc{A}',\pi')}$ be general graphons. Further assume that the $p$-norms of $W$ satisfy:
\begin{eqnarray*}
\sum_{n=1}^\infty \|W\|_{2nk}^{-k}=\infty
\end{eqnarray*}
for all $k\in\mb{N}^+$. 
Then
\[
t(\mf{F},W)=t(\mf{F},W')
\]
for every $\Psi$-decorated simple graph $\mf{F}$ if and only if $W$ and $W'$ are weakly isomorphic.
\end{enumerate}
\end{thm}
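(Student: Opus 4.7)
The ``if'' direction of both parts is immediate from the definitions: if $\mu$ is a measure preserving bijection mod $0$ with $(W')^\mu=W$ a.e., then a change-of-variables in each factor $\tensor*[^{f_{ij}}]W{}(x_i,x_j)$ of the integrand defining $t(\mf{F},W)$ transports it to the corresponding factor for $W'$, and Fubini yields equality of densities. Composing two weak isomorphisms through a common $W_3$ handles the ``if'' direction of part (2) in the same way.

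For the ``only if'' direction I plan to adapt the strategy of Borgs, Chayes and Lov\'asz \cite{BCL}, refined by the functional analytic tools developed above for the weak-* setting. The crucial observation is that for each simple graph $F$ on $[k]$ vertices with decorations $f:E(F)\to\Psi$, the density $t(\mf{F},W)$ is a mixed polynomial moment of the pushforward of $\pi^{\otimes k}$ under the map
\[
(x_1,\ldots,x_k)\mapsto \bigl(\langle f_{ij},W(x_i,x_j)\rangle\bigr)_{ij\in E(F)}.
\]
Varying $F$ and $f$, equality of all densities is therefore equivalent to equality of all mixed polynomial moments of these $|E(F)|$-variate distributions for $W$ and $W'$. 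My plan is to invoke a multivariate Carleman-type uniqueness theorem; the hypothesis $\sum_n\|W\|_{2nk}^{-k}=\infty$ is tailored so that, after a H\"older redistribution of the $L^{2nk}$ mass across the $k$ vertex variables (so that each $2n$-th moment is controlled by $\|W\|_{2nk}^{2nk}$), the resulting one-dimensional moment sequence satisfies the classical Carleman criterion $\sum_n m_{2n}^{-1/(2n)}=\infty$. A standard reduction of multivariate moment determinacy to one-dimensional marginals then transfers uniqueness to the joint measures.

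Once the joint ``neighborhood'' distributions are matched, the construction of an explicit isomorphism mod $0$ follows the BCL template. For each $x\in\Omega$ the weak-* measurable section $W(x,\cdot)$, together with the joint distributions of $\bigl(W(x,\cdot),W(x_2,\cdot),\ldots,W(x_\ell,\cdot)\bigr)$ over tuples of auxiliary points, encodes the ``neighborhood type'' of $x$. The almost twin-free hypothesis ensures that distinct types correspond a.e.\ to distinct points, while the Lebesguian hypothesis allows one to parametrize $\Omega$ and $\Omega'$ as standard Borel spaces. Identifying types between $W$ and $W'$ via the common joint-distribution data then yields a measure preserving bijection mod $0$ that transports $W$ to $W'$, establishing part (1). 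For part (2), I would reduce to part (1) by constructing a canonical twin-free strong Lebesguian quotient $W^\sharp$ of $W$ --- taking conditional expectations via Lemma \ref{le:conditional} along the $\sigma$-algebra generated by the type map --- and similarly $(W')^\sharp$; equality of joint distributions forces $W^\sharp\cong (W')^\sharp$, supplying the common target of weak isomorphisms from $W$ and $W'$.

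The main obstacle I anticipate is the moment-determinacy step: the hypothesis controls only the global $L^p$ norms of $\|W\|_\mc{Z}$, whereas determinacy is needed for joint distributions indexed by tuples of $x_i$ and of decorations $\psi_j\in\Psi$. Converting the global $L^{2nk}$ bound into a Carleman-type bound for the relevant multivariate moment sequences requires a careful Fubini/H\"older splitting that distributes weights across the $k$ vertices of $F$; the exponent $2nk$ and the outer power $-k$ in the hypothesis appear precisely calibrated for this distribution. A secondary issue is showing that the countable dense $\Psi\subset\Phi$ suffices (rather than all of $\Phi$) to separate graphons, which should follow from Lemma \ref{le:weak*} combined with the density of $\Psi$ in $\Phi$.
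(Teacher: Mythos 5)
Your high-level architecture (BCL template plus a Carleman-type determinacy argument, then an explicit isomorphism via ``neighborhood types'', and a reduction of part (2) to part (1) through a Lebesguian twin-free quotient) is the same skeleton the paper uses. But there is one genuine logical gap in your plan and one place where the plan is too thin to count as a proof.

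The gap: you claim that, for a fixed simple $F$, $t(\mf{F},W)$ is a mixed moment of the pushforward of $\pi^{\otimes k}$ under $(x_1,\ldots,x_k)\mapsto(\langle f_{ij},W(x_i,x_j)\rangle)_{ij\in E(F)}$, and then that ``varying $F$ and $f$, equality of all densities is therefore equivalent to equality of all mixed polynomial moments.'' This does not follow. For a \emph{simple} graph $F$, $t(\mf{F},W)=E\bigl[\prod_{ij\in E(F)}\langle f_{ij},W(x_i,x_j)\rangle\bigr]$ is only the $(1,1,\ldots,1)$ mixed moment of that particular $|E(F)|$-variate distribution; higher mixed moments $E\bigl[\prod Z_{ij}^{k_{ij}}\bigr]$ with some $k_{ij}\ge 2$ correspond to \emph{multigraph} densities, and a larger simple graph does not give the same distribution. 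So the hypothesis ``equality of all simple-graph densities'' does not by itself give you equality of mixed moments, which is what your Carleman step needs. The paper supplies a dedicated, nontrivial bridge here (Lemma~\ref{le:multigraph}): using the Hilbert--Schmidt spectral decomposition of the sections $\tensor*[^\ph]W{}$, it shows that equality of simple graph densities forces equality of all multigraph densities. Without this step, your ``crucial observation'' is not available.

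The thin part: ``once the joint neighborhood distributions are matched, the construction of an explicit isomorphism mod $0$ follows the BCL template'' hides most of the work. What is actually required (Lemmas~\ref{le:random-meas}, \ref{le:regular}, \ref{le:equality}, \ref{le:coupling}) is a recursive coupling of the two anchor sequences so that the densities of all \emph{partially labeled} multigraphs agree, plus a Carleman-type bound not for $\|W\|$ itself but for the new random variables $C_{\mf{F}}(X)=t_{\gamma_1\ldots\gamma_k X}(\mf{F},W)$, whose moments are bounded by H\"older in terms of $\|W\|_{n|E(F)|}$. This is exactly why the hypothesis is the full family $\sum_n\|W\|_{2nk}^{-k}=\infty$ for every $k$, not a single Carleman condition; your discussion of ``H\"older redistribution across the $k$ vertices'' correctly intuits the calibration, but you would need to state and prove the coupling lemma to turn that intuition into an argument. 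You also need to check that almost every anchor sequence is regular, and that for a Lebesgue space the injective anchor map has a measurable inverse (this is where ``almost twin-free'' and ``Lebesguian'' are used). Your reduction of part (2) to part (1) is in the right spirit, but in the paper it proceeds through several separate steps (passing to a strong version, countable generation, separation, completion, Lebesgue embedding, twin quotient; Corollary~\ref{cor:Lebesgue}), and one must verify at each stage that both measurability and the $L^p$ norms are preserved -- not a single conditional expectation along a ``type map $\sigma$-algebra.''
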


Here we note that the family of Carleman-type conditions required can be viewed as requiring that not only the function/random variable $\|W(\cdot,\cdot)\|$, but all of its powers should satisfy the Carleman condition in order to obtain a moment determinacy result. 
This may seem superfluous, but the bounds are actually used for determinacy of new random variables induced by partially decorated graphs (see Section \ref{sect:coupling}). These moments do not correspond to expectations of simple powers of the same random variable. In addition, random variables whose moments are bounded from above by (or even equal to) the moments of a power of a moment determinate random variable need not be moment determinate themselves, even for the most common distributions we know (see, e.g., the papers \cite{Berg} by Berg, and \cite{Lin} by Lin and Huang, or the monograph \cite{Stoyanov} by Stoyanov).

The above theorem is a generalization of \cite[Theorem 2.1]{BCL}, since in the case of bounded real valued functions $W$, the Carleman conditions are automatically satisfied.

\section{Graphon constructions}

To prove the second part of our main theorem, we shall - following Borgs, Chayes and Lov\'asz \cite{BCL} - be transforming our original graphon by changing the underlying measure space in several steps until we end up with a standard Lebesguian space, thereby reducing the problem to part $(i)$. This will be achieved mainly through manipulating the corresponding $\sigma$-algebra and adapting the graphon to these successive changes.
We therefore briefly recall the necessary notions from measure theory.

A set $\mc{S}$ of subsets of $\Omega$ induces a partition $\mc{P}[\mc{S}]$ of $\Omega$ through the natural equivalence relation $\omega_1\sim\omega_2 \Leftrightarrow \left[\forall S\in\mc{S}: (\omega_1\in S \wedge \omega_2\in S)\vee(\omega_1\not\in S \wedge \omega_2\not\in S)\right]$. This is the finest partition for which $\mc{S}$ separates the classes. A graphon $W\in\mc{W}_{(\Omega,\mc{A},\pi)}$ is said to be \emph{separating} if $\mc{A}$ separates $\Omega$, or in other words if $W=W/\mc{P}[\mc{A}]$.

A $\sigma$-algebra $\mc {A}$ is said to be \emph{countably generated} if there is a countable set $S\subset\mc{A}$ such that the generated $\sigma$-algebra satisfies $\sigma(S)=\mc{A}$. A set $S\subset\mc{A}$ is said to be a \emph{basis} of the measure space $(\Omega,\mc{A},\pi)$ if $\sigma(S)$ is \emph{dense} in $\mc{A}$, that is, if for every $A_1\in\mc{A}$ there exists an $A_2\in\sigma(S)$ such that $\pi(A_1\triangle A_2)=0$. A graphon $W\in\mc{W}_{(\Omega,\mc{A},\pi)}$ is said to be countably generated if $\mc{A}$ itself is.

A probability space $(\Omega',\mc{A}',\pi')$ is said to be a \emph{full subspace} of the probability space $(\Omega,\mc{A},\pi)$ if $\Omega'\subset\Omega$ has outer measure $1$ (it need not be measurable) and $(\mc{A}',\pi')$ is the restriction of $(\mc{A},\pi)$ to $\Omega'$, i.e., $\mc{A}'=\left\{A\cap\Omega'|A\in\mc{A}\right\}$ and $\pi'(A\cap\Omega')=\pi(A)$ for all $A\in\mc{A}$.\\
A measure preserving map $\mu:(\Omega,\mc{A},\pi)\to(\Omega',\mc{A}',\pi')$ between two probability spaces is called an \emph{embedding} of $(\Omega,\mc{A},\pi)$ into $(\Omega',\mc{A}'\pi')$ if it is an isomorphism between the former and a full subspace of the latter. Given two graphons $W\in\mc{W}_{(\Omega,\mc{A},\pi)}$ and $W'\in~\mc{W}_{(\Omega',\mc{A}',\pi')}$, the embedding $\mu$ is said to be an \emph{embedding of $W$ into $W'$} if in addition $(W')^\mu=W$ almost everywhere.

Let us start with the following Lemma, that allows us to change the $\sigma$-algebra we work with to a countably generated one one, without losing measurability.

\begin{lemma}\label{le:countable}
Let $(\Omega,\mc{A})$ and $(\Omega',\mc{A}')$ be measurable spaces, and let $W:\Omega\times\Omega'\to\mc{Z}$ be a weak-* measurable function with respect to the $\sigma$-algebra $(\mc{A}\times\mc{A}')$. Then there exist countably generated $\sigma$-algebras $\mc{A}_0\subset\mc{A}$ and $\mc{A}_0'\subset\mc{A}'$ such that $W$ is weak-* measurable with respect to $(\mc{A}_0\times\mc{A}'_0)$.
\end{lemma}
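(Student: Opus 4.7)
The plan is to reduce the weak-* measurability statement to a countable collection of scalar measurability statements via the dense set $\Psi\subset\Phi$, and then use a standard $\sigma$-algebra trick to control each scalar function by a product of countably generated $\sigma$-algebras.

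First I would observe the following general fact: every set $E\in\mc{A}\times\mc{A}'$ already lies in the $\sigma$-algebra generated by countably many rectangles. Indeed, the collection $\mc{R}$ of those $E\in\mc{A}\times\mc{A}'$ that lie in $\sigma(\mc{C}\times\mc{C}')$ for some countable $\mc{C}\subset\mc{A}$, $\mc{C}'\subset\mc{A}'$ contains all measurable rectangles and is closed under complements and countable unions (merge the countable collections), hence equals $\mc{A}\times\mc{A}'$.

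Next, for each $\psi\in\Psi$ the scalar function $\tensor*[^\psi]W{}$ is measurable on $(\Omega\times\Omega',\mc{A}\times\mc{A}')$, so for every $r\in\mb{Q}$ the sublevel set $\{\tensor*[^\psi]W{}\leq r\}$ belongs to $\mc{A}\times\mc{A}'$. By the previous paragraph we can pick countable families $\mc{C}_{\psi,r}\subset\mc{A}$ and $\mc{C}'_{\psi,r}\subset\mc{A}'$ such that this sublevel set lies in $\sigma(\mc{C}_{\psi,r}\times\mc{C}'_{\psi,r})$. Setting
\[
\mc{A}_0 := \sigma\Bigl(\bigcup_{\psi\in\Psi,\,r\in\mb{Q}}\mc{C}_{\psi,r}\Bigr),\qquad
\mc{A}_0':=\sigma\Bigl(\bigcup_{\psi\in\Psi,\,r\in\mb{Q}}\mc{C}'_{\psi,r}\Bigr),
\]
yields countably generated sub-$\sigma$-algebras of $\mc{A}$ and $\mc{A}'$, since $\Psi$ and $\mb{Q}$ are countable. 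For every $\psi\in\Psi$ and $r\in\mb{Q}$ the set $\{\tensor*[^\psi]W{}\leq r\}$ lies in $\mc{A}_0\times\mc{A}_0'$, so $\tensor*[^\psi]W{}$ is $(\mc{A}_0\times\mc{A}_0')$-measurable.

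Finally, since each $\tensor*[^\psi]W{}$ is measurable with respect to $\mc{A}_0\times\mc{A}_0'$ for $\psi$ in the countable dense set $\Psi\subset\Phi$, Lemma \ref{le:weak*} (whose proof is purely pointwise and does not use the underlying measure) gives that $W$ itself is weak-* measurable with respect to $\mc{A}_0\times\mc{A}_0'$. The only delicate point in the argument is the rectangle reduction in the first paragraph; everything else is formal bookkeeping and a direct appeal to Lemma \ref{le:weak*}.
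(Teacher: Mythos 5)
Your argument is correct and takes essentially the same approach as the paper: reduce to the countably many scalar functions $\tensor*[^\psi]W{}$, $\psi\in\Psi$, find for each a countably generated product sub-$\sigma$-algebra making it measurable, merge these countably many $\sigma$-algebras, and invoke Lemma \ref{le:weak*}. The only difference is that the paper cites \cite[Lemma 3.4]{BCL} (combined with an $\arctan$ composition to drop the boundedness hypothesis there) for the scalar step, whereas you re-derive it directly from the standard ``good sets'' observation that every element of a product $\sigma$-algebra already lies in the $\sigma$-algebra generated by countably many rectangles, applied to the rational sublevel sets of $\tensor*[^\psi]W{}$.
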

\begin{proof}
Choose a dense countable subset $\Psi\subset\Phi$. For each $\psi\in\Psi$ we can find countable sets $\mc{S}_\psi\subset\mc{A}$ and $\mc{S}'_\psi\subset\mc{A}'$ such that $\psi\circ W$ is measurable w.r.t. the generated $\sigma$-algebra (cf. \cite[Lemma 3.4]{BCL}, boundedness is actually not needed, since we can compose with the $\arctan$ function to reduce to the bounded case). Thus taking $\mc{A}_0$ and $\mc{A}_0'$ to be the sub-$\sigma$-algebras generated by $(\mc{S}_\psi)_{\psi\in\Psi}$ and $(\mc{S}'_\psi)_{\psi\in\Psi}$, respectively, Lemma \ref{le:weak*} ensures the required measurability of $W$.
\end{proof}

Next we shall introduce two further constructions that given a graphon allow us to create a new graphon, preserving some of its essential properties.

\begin{remark}
For ease of notation, using Proposition \ref{prop:RN}, the integrals we write from here on are to be understood in the weak-* sense rather than the strong/Bochner sense.
\end{remark}

\begin{lemma}\label{le:push-forward}
Let $(\Omega,\mc{A},\pi)$ and $(\Omega',\mc{A}',\pi')$ be probability spaces, let $\tau:\Omega\to\Omega'$ be a measure preserving map, and let $W\in\mc{W}_{(\Omega, \mc{A},\pi)}$.
\begin{enumerate}
\item There exists a 
strong graphon $W'\in\mc{W}_{(\Omega', \mc{A}',\pi')}$
that satisfies
\begin{equation}\label{eqn:push-forward}
\int_{A_1'\times A_2'} W_\tau(x',y')d\pi'(x')d\pi'(y')=\int_{\tau^{-1}(A_1')\times\tau^{-1}(A_2')}W(x,y)d\pi(x)d\pi(y)
\end{equation}
for all $A_1',A_2'\in\mc{A}'$.
\item If $\tau$ is an embedding, then $(W_\tau)^\tau=W$ almost everywhere.
\end{enumerate}
\end{lemma}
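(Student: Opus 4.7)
My plan for part (i) is to construct $W_\tau$ as a Radon--Nikodym density of a push-forward vector measure. Define the $\mc{Z}$-valued measure $\mu$ on the product $\sigma$-algebra $\mc{A}' \times \mc{A}'$ by
$$\mu(E) := \int_{(\tau \times \tau)^{-1}(E)} W \, d(\pi \times \pi),$$
understood as a weak-$*$ integral. Measure preservation of $\tau$ makes $\mu \ll \pi' \times \pi'$, and $\|W\|_\mc{Z} \in L^1$ makes $\mu$ of finite variation. Applying Proposition \ref{prop:RN} on $(\Omega'^2, \mc{A}' \times \mc{A}', \pi' \times \pi')$ produces a weak-$*$ integrable density $W_\tau$ with $\mu(E) = \int_E W_\tau$; restricting to rectangles $E = A_1' \times A_2'$ gives (\ref{eqn:push-forward}). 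Crucially, the construction in the proof of Proposition \ref{prop:RN} produces a density measurable with respect to the uncompleted $\sigma$-algebra $\mc{A}' \times \mc{A}'$, so $W_\tau$ is automatically strong.

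To verify the remaining graphon axioms, symmetry of $\mu$ (inherited from $W$) and essential uniqueness of the density force $W_\tau$ to equal its transpose almost everywhere; redefining on a null set makes this pointwise. For the $L^p$ bound, set $f := \|W\|_\mc{Z} \in \bigcap_p L^p$ and let $g$ be the $(\mc{A}' \times \mc{A}')$-measurable function with $g \circ (\tau \times \tau) = E\bigl(f \,\big|\, (\tau \times \tau)^{-1}(\mc{A}' \times \mc{A}')\bigr)$. For each $\psi$ in a countable dense subset of the unit ball of $\Phi$, the scalar Radon--Nikodym density of the push-forward of $|\langle \psi, W\rangle| \cdot (\pi \times \pi)$ is pointwise bounded by $g$ since $|\langle\psi, W\rangle| \leq f$, and it majorizes $|\langle\psi, W_\tau\rangle|$. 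Taking the supremum over $\psi$ yields $\|W_\tau\|_\mc{Z} \leq g$ almost everywhere, and then $\|W_\tau\|_p \leq \|g\|_p \leq \|f\|_p = \|W\|_p$ by contractivity of conditional expectation.

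For part (ii), assume $\tau$ is an embedding, so it is an injective measure preserving isomorphism onto a full subspace $\Omega_0' \subset \Omega'$. Fix $\ph \in \Phi$ and an arbitrary rectangle $A_1 \times A_2 \in \mc{A} \times \mc{A}$; since $\tau(A_i) \in \mc{A}_0'$, we can pick $A_i' \in \mc{A}'$ with $\tau^{-1}(A_i') = A_i$. Measure preservation of $\tau \times \tau$ and equation (\ref{eqn:push-forward}) applied to the weak-$*$ evaluation $\langle \ph, W\rangle$ together give
$$\int_{A_1 \times A_2} \langle \ph, (W_\tau)^\tau\rangle \, d(\pi \times \pi) = \int_{A_1' \times A_2'} \langle \ph, W_\tau\rangle \, d(\pi' \times \pi') = \int_{A_1 \times A_2} \langle \ph, W\rangle \, d(\pi \times \pi).$$
Rectangles form a generating $\pi$-system for $\mc{A} \times \mc{A}$, so these integrals agree on the whole $\sigma$-algebra, hence $\langle \ph, (W_\tau)^\tau\rangle = \langle \ph, W\rangle$ almost everywhere. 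Applying this to every $\psi$ in a countable separating $\Psi \subset \Phi$ and intersecting the exceptional null sets yields $(W_\tau)^\tau = W$ almost everywhere.

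The main obstacle I anticipate is ensuring $W_\tau$ is strong rather than only measurable with respect to the completion of $\mc{A}' \times \mc{A}'$. This is exactly where applying Proposition \ref{prop:RN} directly on the uncompleted product $\sigma$-algebra pays off: its recursive construction builds the density from countably many scalar Radon--Nikodym derivatives, each of which is $(\mc{A}' \times \mc{A}')$-measurable by the classical Radon--Nikodym theorem, so no lifting step is needed and no passage to a completion is forced.
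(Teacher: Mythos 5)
Your proof is correct and takes essentially the same route as the paper: both construct the push-forward vector measure and appeal to the completeness-free Radon--Nikodym result (Proposition \ref{prop:RN}, directly or via its corollary Lemma \ref{le:conditional}) to obtain an $\mc{A}'\times\mc{A}'$-measurable density, which is exactly where strongness comes from. The only organizational difference is that the paper first forms the conditional expectation $\widetilde W=E(W\mid\mc{A}'_\tau\times\mc{A}'_\tau)$ with $\mc{A}'_\tau=\tau^{-1}(\mc{A}')$ and then descends, which makes part (ii) immediate (when $\tau$ is an embedding, $\mc{A}'_\tau=\mc{A}$, so $(W_\tau)^\tau=\widetilde W=W$ a.e.) and also yields the $L^p$ bound directly from Lemma \ref{le:conditional}; you push the measure forward first and then recover both facts by separate computations, which is a little more work but equally valid.
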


\begin{proof}
First let $\mc{A}'_\tau:=\tau^{-1}(\mc{A}')\subset\mc{A}$ and define $\widetilde{W}:=E(W|\mc{A}'_\tau\times\mc{A}'_\tau)$. Also let $\pi_\tau:=\pi|_{\mc{A}'_\tau}$. Then define a measure $\mu$ on $\mc{A}'_\tau\times\mc{A}'_\tau$ through
\[
\mu(A_1\times A_2)=\int_{A_1\times A_2} W(x,y) d\pi(x)d\pi(y)
\]
for all $A_1,A_2\in\mc{A}'_\tau$. Since $\|W\|$ lies in $L^1$, we have that $\mu\ll \pi\times\pi$. Let $\mu_\tau$ be the push-forward of $\mu$. Since $\ph$ is measure-preserving, $\mu_\ph$ is a $\mc{Z}$-valued measure on $\mc{A}'$ that is absolutely continuous with respect to $\pi'\times\pi'$.

By Lemma \ref{le:conditional}, there exists a weak-* integrable function $W_\tau:\Omega'\times\Omega'\to\mc{Z}$ such that for all $A'_1,A'_2\in\mc{A}'$,
\[
\mu(A'_1\times A'_2)=\int_{A'_1\times A'_2} W'(x,y) d\pi'(x)d\pi'(y).
\]
This $W'$ then clearly satisfies equation \ref{eqn:push-forward}, and since the push-forward $\mu'$ of the symmetric measure $\mu$ is itself symmetric, it follows that $W'$ is too.
 By the norm inequality in Lemma \ref{le:conditional}, we actually have that $W'\in\mc{W}_{(\Omega', \mc{A}',\pi')}$, completing the proof of part (i).

By construction $(W_\tau)^\tau=\widetilde{W}=E(W|\mc{A}'_\tau\times\mc{A}'_\tau)$, so part (ii) follows from the definition of $\mc{A}'_\tau$.
\end{proof}

\begin{definition}
The function $W_\tau$ defined in \ref{le:push-forward} is called the \emph{push-forward} of $W$ under $\tau$.
\end{definition}

Given a graphon $W\in\mc{W}_{(\Omega,\mc{A},\pi)}$ and a partition $\mc{P}$ of $\Omega$, we can use the push-forward construction to define the \emph{factor graphon} $W/\mc{P}$ of $W$ under $\mc{P}$. Consider the surjection $\mu:\omega\mapsto[\omega]$ from $\Omega$ to $\Omega/\mc{P}$, where $[\omega]$ denotes the partition class of $\omega$. Define the measure space $(\Omega/\mc{P},\mc{A}/\mc{P},\pi/\mc{P})$ as the push-forward of $(\Omega,\mc{A},\pi)$ under $\mu$. Then $\mu$ is automatically measure preserving, and we let $W/\mc{P}:=W_\mu\in \mc{W}_{(\Omega/\mc{P},\mc{A}/\mc{P},\pi/\mc{P})}$.

Note that push-forwards being a type of conditional expectations, the moments of a graphon are usually not preserved. However, if the underlying measure space is carefully manipulated, this problem can be avoided, as illustrated by the next theorem, which sums up the different steps that will allow us to pass from part (i) to part (ii) of Theorem \ref{thm:main}.

\begin{thm}
Let $W\in\mc{W}_{(\Omega, \mc{A},\pi)}$ be a graphon.
\begin{enumerate}
\item One can change the value of $W$ on a set of $\pi\times\pi$-measure 0 to get a strong $\mc{Z}_1$-valued graphon.
\item Suppose that $W$ is a strong graphon. Then there exists a countably generated $\sigma$-algebra $\mc{A}'\subset\mc{A}$ such that $W$ is weak-* measurable with respect to $(\mc{A}'\times\mc{A}')$.
\item The graphon $W/\mc{P}[\mc{A}]$ is separating. If $W$ is countably generated, then so is $W/\mc{P}[\mc{A}]$.
\item  Suppose that $W$ is a separating graphon on a probability space with a countable basis. Then the completion of $W$ can be embedded in a Lebesguian graphon.
\item  Suppose that $W$ is a strong graphon, and let $\mc{P}$ be the partition into the twin-classes of $W$. Then $W/\mc{P}$ is almost twin-free. If $W$ is Lebesguian, then $W/\mc{P}$ is Lebesguian as well. Furthermore the projection $W\to W/\mc{P}$ is a weak isomorphism.
\end{enumerate}
\end{thm}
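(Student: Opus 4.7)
My plan is to prove the five items essentially in order, using Lemmas \ref{le:weak*}, \ref{le:countable}, \ref{le:conditional} and \ref{le:push-forward} as the main tools. For (i), I fix a countable dense $\Psi\subset\Phi$; for each $\psi\in\Psi$ the real valued function $\tensor*[^\psi]W{}$ is $\overline{\mc{A}\times\mc{A}}$-measurable and therefore agrees outside a $\pi\times\pi$-null set $N_\psi$ with an $\mc{A}\times\mc{A}$-measurable function. Since $\Psi$ is countable, $N:=\bigcup_\psi N_\psi$ is still null; redefining $W\equiv 0$ on $N$ makes every $\tensor*[^\psi]W{}$ genuinely $\mc{A}\times\mc{A}$-measurable, and Lemma \ref{le:weak*} upgrades this to weak-* measurability with respect to $\mc{A}\times\mc{A}$, yielding a strong graphon. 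For (ii), Lemma \ref{le:countable} produces countably generated $\mc{A}_0,\mc{A}_0'\subset\mc{A}$ with $W$ weak-* measurable on $\mc{A}_0\times\mc{A}_0'$; then $\mc{A}':=\sigma(\mc{A}_0\cup\mc{A}_0')$ is still countably generated and contains both factors, so $W$ is weak-* measurable on $\mc{A}'\times\mc{A}'$.

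Part (iii) is immediate from the definitions: the equivalence relation defining $\mc{P}[\mc{A}]$ identifies precisely the points that $\mc{A}$ does not separate, so the quotient $\sigma$-algebra separates $\Omega/\mc{P}[\mc{A}]$; and if $\{A_n\}$ generates $\mc{A}$, then $\{A_n/\mc{P}[\mc{A}]\}$ generates $\mc{A}/\mc{P}[\mc{A}]$, because the quotient map sets up a bijection between the two $\sigma$-algebras at the level of $\mc{P}[\mc{A}]$-saturated sets.

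Part (iv) is the main obstacle. Let $\{B_n\}_{n\in\mb{N}}\subset\mc{A}$ be a countable basis, set $\sigma_0:=\sigma(\{B_n\})$, and consider
\[
\mu:\Omega\to\{0,1\}^{\mb{N}},\qquad \mu(\omega):=(\1_{B_n}(\omega))_{n\in\mb{N}},
\]
which is measurable when $\{0,1\}^{\mb{N}}$ carries its product Borel $\sigma$-algebra $\mc{B}$. The push-forward $\pi^*:=\mu_*\pi$ turns $(\{0,1\}^{\mb{N}},\mc{B},\pi^*)$ into a standard probability space, and Lemma \ref{le:push-forward} provides a strong graphon $W_\mu$ on this Lebesguian space. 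The delicate step is to show that $\mu$ is, after discarding a null set, an isomorphism mod $0$ onto a full subspace of the target: first, an argument like in (i) shows that a modification of $W$ on a null set is weak-* measurable with respect to $\sigma_0\times\sigma_0$, since $\sigma_0$ is dense in $\mc{A}$; second, the separating property of $\mc{A}$ combined with density of $\sigma_0$ implies that any two distinct non-equivalent points of $\Omega$ are already separated by $\sigma_0$ outside a single null set, so $\mu$ is injective there. Once this is in place, Lemma \ref{le:push-forward}(ii) yields $(W_\mu)^\mu=W$ almost everywhere, and we obtain an embedding of the completion $\overline{W}$ into the Lebesguian graphon $W_\mu$.

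For (v), let $\mc{P}$ be the twin partition of $W$, and let $\mc{A}_\mu\subset\mc{A}$ consist of the $\mc{P}$-saturated $\mc{A}$-sets, so that $\mc{A}_\mu=\mu^{-1}(\mc{A}/\mc{P})$ for the projection $\mu:\omega\mapsto[\omega]$. By the definition of twins, for every $\psi\in\Psi$ and almost every $y$ the function $\tensor*[^\psi]W{}(\cdot,y)$ is constant on $\mc{P}$-classes, which via a Fubini argument and symmetry shows that $W$ is almost $(\mc{A}_\mu\times\mc{A}_\mu)$-measurable; Lemma \ref{le:conditional} then gives $E(W|\mc{A}_\mu\times\mc{A}_\mu)=W$ almost everywhere, hence $(W/\mc{P})^\mu=W$ almost everywhere. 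Distinct classes $[\omega_1]\neq[\omega_2]$ correspond by definition to non-twin points $\omega_1,\omega_2$ of $W$, so $W(\omega_1,\cdot)\neq W(\omega_2,\cdot)$ on a set of positive measure; transporting this non-twin-ness to $W/\mc{P}$ via $(W/\mc{P})^\mu=W$ a.e.\ yields almost twin-freeness of $W/\mc{P}$ outside the appropriate null set. In the Lebesguian case the twin partition is described by the countable family of tests $\tensor*[^\psi]W{}(\cdot,y)$ with $\psi\in\Psi$ and $y$ in a countable dense family, which is a measurable equivalence relation on a standard space and therefore produces again a standard quotient. Finally, $\mu$ is measure preserving from $\overline{\mc{A}}$ to $\mc{A}/\mc{P}$ by construction of the factor space, and the identity $(W/\mc{P})^\mu=W$ a.e.\ is by definition the statement that $\mu$ is a weak isomorphism $W\to W/\mc{P}$.
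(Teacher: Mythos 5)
Your parts (i)--(iii) are fine and close in spirit to the paper. For (i), the paper works with the conditional expectation $W':=E(W\mid\mc{A}\times\mc{A})$ and shows $W'=W$ a.e. directly; your modification-on-a-null-set approach also works, but be careful: the union $N:=\bigcup_\psi N_\psi$ is only $\overline{\mc{A}\times\mc{A}}$-null, and you must enlarge it to an $\mc{A}\times\mc{A}$-measurable (and symmetric) null superset before you can conclude that the redefined $\tensor*[^\psi]W{}$ are genuinely $\mc{A}\times\mc{A}$-measurable; otherwise the indicator of $N$ itself breaks measurability.

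Part (iv) has a real gap. Your claim that ``the separating property of $\mc{A}$ combined with density of $\sigma_0$ implies that any two distinct points of $\Omega$ are already separated by $\sigma_0$ outside a single null set'' is precisely the nontrivial heart of Rohlin's embedding theorem, and you assert it without proof. Density of $\sigma_0$ in $\mc{A}$ is a statement modulo measure zero, and there is no direct argument of the form ``take $A\in\mc{A}$ separating $\omega_1,\omega_2$, approximate by $B\in\sigma_0$'' because the symmetric difference $A\triangle B$ (which may contain $\omega_1$ or $\omega_2$) varies with $A$, and there are uncountably many sets $A$ to control. The paper avoids re-deriving this: it passes to the completion, observes that the completed space is a separating complete probability space with a countable basis, and then cites Rohlin (\cite[Section~2.2]{Rohlin}) for the existence of an embedding into a Lebesgue space, after which the push-forward from Lemma~\ref{le:push-forward} finishes the job. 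If you want to prove the injectivity-off-a-null-set directly, you have to reconstruct the Rohlin machinery for adapting the basis; as written the step is missing.

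Part (v) has a second gap in the claim that almost-everywhere $\mc{A}_\mc{P}$-measurability of the slices $W(\cdot,y)$ plus symmetry ``via a Fubini argument'' gives that $W$ is almost $\mc{A}_\mc{P}\times\mc{A}_\mc{P}$-measurable. This is not a Fubini argument: measurability of every slice with respect to a sub-$\sigma$-algebra does not by itself imply joint measurability with respect to the product of the sub-$\sigma$-algebras. The paper needs the Corollary following Lemma~\ref{le:random-meas}, whose proof runs through random sampling, $L^2$ weak compactness, and L\'evy's Upward Theorem (and the paper defers the details of this specific verification to \cite[Section~3.3.5]{BCL}). You need to invoke that Corollary, or re-prove something of equivalent strength; it cannot be dismissed as Fubini. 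Also, your argument that the quotient is Lebesguian by using $\tensor*[^\psi]W{}(\cdot,y)$ for ``$y$ in a countable dense family'' is not quite right, since there is no distinguished countable dense family of $y$'s for which these slices characterize twin-hood; the paper instead introduces the slice measures $\mu_x(A)=\int_A W(x,y)\,d\pi(y)$, notes that twins are exactly points with $\mu_x=\mu_{x'}$, and separates non-twins using countably many sets $S_{\psi,T,r}=\{x:\psi(\mu_x(T))\geq r\}$ for $T$ ranging over a countable generating family closed under intersections. You should replace your pointwise-slice family with that integrated construction.
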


For part (i), consider the conditional expectation $W':=E(W\left|\mc{A}\times\mc{A}\right.)$. Since $W$ and the underlying $\sigma$-algebra both are symmetric, we may assume that so is $W'$. By construction, $W'$ is measurable with respect to $\mc{A}\times\mc{A}$, so it is enough to show that $W=W'$ almost everywhere. To this end note that $\int_{A_1\times A_2}(W'-W)=0$ for all $A_1,A_2\in\mc{A}$, and therefore also for all $S\in\overline{\mc{A}\times\mc{A}}$ we have $\int_S (W'-W)=0$, implying $W'-W=0$ almost everywhere.

Part (ii) is an easy consequence of Lemma \ref{le:countable}.

For part (iii), note that by identifying elements in the came class of the partition $\mc{P}[\mc{A}]$, we obtain a $\sigma$-algebra that is isomorphic to $\mc{A}$ under the natural map.

For part (iv), let $\mc{S}$ be a countable set generating $\mc{A}$. Since any separating complete probability space with a countable basis can be embedded into a Lebesgue space (see e.g. \cite[Section 2.2]{Rohlin}) and $\mc{S}$ is a countable basis for $(\overline{\mc{A}},\overline{\pi})$, there exists an embedding $\eta:(\Omega,\overline{\mc{A}},\overline{\pi})\to(\Omega',\overline{\mc{A}'},\overline{\pi'})$
where $(\Omega',\overline{\mc{A}'},\overline{\pi'})$ is a Lebesgue space. Consider the push-forward $W':=W_\eta$. By Lemma \ref{le:push-forward}(ii), the mapping $\eta$ is an embedding of the completion $\overline{W}$ of $W$ into the Lebesguian graphon $W'$.

For part (v), we may by part (ii) assume that $\mc{A}$ is countably generated, since the relation of being twins is the same for any underlying $\sigma$-algebra $\mc{B}\times\mc{B}$ that leaves $W$ weak*-measurable (and hence weak-* integrable).
Let $\mc{A}_\mc{P}$ denote the sub-$\sigma$-algebra of $\mc{A}$ consisting of the sets that do not separate any pair of twins. Note that by the construction in Lemma \ref{le:push-forward}, we then have that $(W/\mc{P})^\tau=E(W|\mc{A}_\mc{P}\times\mc{A}_\mc{P})\in\mc{W}_{(\Omega, \mc{A}_\mc{P},\pi)}$. Thus the projection $W\to W/\mc{P}$ is a weak isomorphism if $W=E(W|\mc{A}_\mc{P}\times\mc{A}_\mc{P})$ almost everywhere.

Let $\widehat{W}:=E(W|\mc{A}_\mc{P}\times\mc{A}_\mc{P})$. Since $\Psi\subset\Phi$ is countable and dense, hence separates elements in $\mc{Z}$, it is sufficient to show that for any $A,B\in\mc{A}$ and $\psi\in\Psi$ we have
\[
\int_{A\times B}\tensor*[^\psi]W{}(x,y) d\pi(x) d\pi(y)=\int_{A\times B} \tensor*[^\psi]{\widehat{W}}{}(x,y)d\pi(x)d\pi(y).
\]

This, and the fact that $W/\mc{P}$ is twin-free, can easily be proven, and we refer to \cite[Section 3.3.5]{BCL} for the details.

We now wish to show that $W/\mc{P}$ is Lebesguian under the assumption that $W$ itself is. By \cite[Section 3.2]{Rohlin} the measure space $(\Omega/\mc{P},\mc{A}/\mc{P},\pi/\mc{P})$ is a Lebesgue space if there exists a countable set $\mathcal{S}\subset\mc{A}$ that separates points if and only if they are from different partition classes. Let $\mc{T}$ be a countable set generating $\mc{A}$, closed under finite intersections. For each $A\in\mc{A}$ and $x\in\Omega$, let
\[
\mu_x(A)=\int_A W(x,y) d\pi(y).
\]
Since $W$ is weak-* integrable with respect to $\mc{A}\times\mc{A}$, the mapping $A\mapsto\mu_x(A)$ is a $\mc{Z}$-valued measure for all $x\in\Omega$, and $x\mapsto\mu_x(A)$ is an $\mc{A}$-measurable function on $\Omega$ for each $A\in\mc{A}$.
Note that then $x,x'$ are twins if and only if $\mu_x(A)=\mu_{x'}(A)$ for all $A\in\mc{A}$, and since each measure $\mu_x(\cdot)$ is uniquely determined by the values taken on $\mc{T}$, $x$ and $x'$ are twins if and only if $\mu_x(T)=\mu_{x'}(T)$ for all $T\in\mc{T}$.

 Let $\Psi\subset\Phi$ be a countable dense set, and for every $\psi\in\Psi$, $T\in\mc{T}$ and rational number $r$ let $S_{\psi,T,r}:=\{x\in\Omega:\psi(\mu_x(T))\geq r\}$. These are countably many and clearly do not separate twins. If however $x$ and $x'$ are not twins, then for some $T\in\mc{T}$ we have $\mu_x(T)\neq\mu_{x'}(T)$. Since $\Psi$ separates $\mc{Z}$, we can then find $\psi\in\Psi$ and $r\in\mb{Q}$ such that $S_{\psi,T,r}$ separates $x$ and $x'$.

\begin{cor}\label{cor:Lebesgue}
Every graphon $W\in\mc{W}_{(\Omega, \mc{A},\pi)}$ has a weak isomorphism into an almost twin-free strong Lebesguian $\mc{Z}$-valued graphon $\widetilde{W}$. In addition $\|W\|_p=\left\|\widetilde{W}\right\|_p$ for all $1\leq p<\infty$.
\end{cor}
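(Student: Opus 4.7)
The plan is to chain together the five assertions of the preceding theorem, applying each construction in order and composing the resulting weak isomorphisms. First I would invoke part (i) to replace $W$ by a strong graphon $W_1$ on the same measure space, differing from $W$ only on a $\pi\times\pi$-null set; the identity map is a weak isomorphism $W\to W_1$. Next, by part (ii) there is a countably generated sub-$\sigma$-algebra $\mc{A}'\subset\mc{A}$ with respect to which $W_1$ is weak-* measurable on $\mc{A}'\times\mc{A}'$, and part (iii) lets me pass to $W_2:=W_1/\mc{P}[\mc{A}']$, a separating, countably generated graphon. The quotient map $\mu$ satisfies $(W_2)^\mu = E(W_1\mid\mc{A}'\times\mc{A}')$ by Lemma~\ref{le:push-forward}, and this conditional expectation equals $W_1$ almost everywhere thanks to the measurability guaranteed in part (ii); hence $\mu$ is a weak isomorphism $W_1\to W_2$.

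Continuing, I would apply part (iv) to the completion $\overline{W_2}$ to obtain an embedding $\eta$ into a Lebesguian graphon $W_3$, which is automatically strong by Lemma~\ref{le:push-forward}(i). Part (v) then factorises $W_3$ by its twin partition, producing an almost twin-free Lebesguian graphon $\widetilde{W}:=W_3/\mc{P}$ together with a weak isomorphism $W_3\to\widetilde{W}$; as the factor is again defined via push-forward, Lemma~\ref{le:push-forward}(i) ensures it is strong as well. Composing the weak isomorphisms from each stage yields a single map $\widetilde{\eta}:\Omega\to\widetilde{\Omega}$ that is measure preserving from $\overline{\mc{A}}$ into the target algebra and satisfies $\widetilde{W}^{\widetilde{\eta}}=W$ almost everywhere; that weak isomorphisms compose correctly (in particular, extending suitably through successive completions) is a short direct verification.

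For the $L^p$-norm equality, each stage in the chain is a weak isomorphism $U\to V$ via a measure preserving map $\theta$ with $V^\theta=U$ a.e., so the change-of-variables identity
\[
\|U\|_p^p = \int \|V(\theta(x),\theta(y))\|_\mc{Z}^p\,d\pi(x)\,d\pi(y) = \|V\|_p^p
\]
preserves the $p$-norm at every step, giving $\|W\|_p=\|\widetilde{W}\|_p$ for all $1\leq p<\infty$. There is no deep obstacle; the argument is essentially organizational, and the only thing to monitor is that each construction's output satisfies the hypotheses needed for the next step (countable generation surviving through part (iii) into part (iv), strongness being re-established automatically via the strong push-forward of Lemma~\ref{le:push-forward}(i), and the Lebesguian property being preserved by part (v)).
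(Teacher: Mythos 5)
Your proposal is correct and follows the same route as the paper's proof: apply the five parts of the preceding theorem in order and compose the resulting measure preserving maps into a single weak isomorphism, then observe that each stage preserves the $L^p$-norm. Your version is somewhat more detailed — it verifies explicitly that each step is a weak isomorphism (e.g.\ using Lemma~\ref{le:push-forward} and part (ii) to justify that the factor map in part (iii) is a weak isomorphism, and noting that push-forwards are automatically strong) and gives the change-of-variables computation for the $p$-norms — whereas the paper compresses steps (ii)--(iv) into a single embedding and simply remarks that none of the transformations change the norm. Both glide over the same technical point, namely that composing weak isomorphisms through successive completions remains a weak isomorphism, but that is a routine verification in both cases.
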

\begin{proof}
First let us change the graphon $W\in\mc{W}_{(\Omega, \mc{A},\pi)}$ on a null-set to obtain 
a strong graphon $W_1$ applying (i) of the previous theorem. Then the identity map on the underlying probability space $(\Omega, \mc{A},\pi)$ will be a $\overline{\mc{A}}-\overline{\mc{A}}$-measurable weak isomorphism $\tau_1$ from $W$ to $W_1$. 
Now by points (ii)-(iv) the completion of the graphon $W_1$ has an embedding $\tau_2$ into a Lebesguian graphon $W_3$, which in turn by point (v) can be projected onto its almost twin-free Lebesguian form $\widetilde{W}\in\mc{W}_{(\widetilde{\Omega}, \widetilde{\mc{A}},\widetilde{\pi})}$ through a weak isomorphism $\tau_3$. Since the composition $\tau:=\tau_3\circ\tau_2\circ\tau_1$ is a $\overline{\mc{A}}-\widetilde{\mc{A}}$-measurable mapping between $(\Omega, \mc{A},\pi)$ and the Lebesgue space $(\widetilde{\Omega}, \widetilde{\mc{A}},\widetilde{\pi})$, it is indeed a weak isomorphism from $W$ into $\widetilde{W}$. Concerning the last assertion, note that none of the transformations in the previous theorem actually changed the norm of the graphon.

\end{proof}

\section{Anchored graphons}
As mentioned in the introduction, two-variables functions on a product measure space $\Omega\times\Omega$ lack a canonical form, or a canonical reordering of the underlying product  space. To counter this, Borgs, Chayes and Lov\'asz made in \cite{BCL} use of countable sequences of random elements of $\Omega$ and defined what they called \emph{canonical ensembles}. In essence this means that instead of having a single well-defined canonical form, one rather has a whole family of functions, together with a probability measure on said family. This pair contains all the relevant information on the original function, and is easier to handle than the original single function. In addition these new functions all live on the same $\sigma$-algebra, independently of the measure space of the original graphon, making it possible to compare these random representations.

First we shall need a few results that guarantee that with probability 1, the randomness we wish to introduce does not interfere with measurability.

\begin{lemma}\label{le:random-meas}
Let $(\Omega,\mc{A},\pi)$ and $(\Omega',\mc{A}',\pi')$ be probability spaces, and let $W:\Omega\times\Omega'\to\mc{Z}$ be a weak-* measurable function with respect to $\mc{A}\times\mc{A}'$ such that $\|W\|\in L^2(\Omega\times\Omega')$. Let further $Y_1,Y_2,\ldots$ be independent random points from $\Omega'$, and $\mc{A}_0\subseteq\mc{A}$ the random $\sigma$-algebra generated by the functions $W(\cdot,Y_k)$. Then with probability 1, $W$ is almost weak-* measurable with respect to $\mc{A}_0\times\mc{A}'$.
\end{lemma}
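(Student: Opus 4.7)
The plan is to reduce the statement to a scalar Fourier-type decomposition and invoke the strong law of large numbers. Since being almost weak-* measurable with respect to $\mc{A}_0\times\mc{A}'$ amounts, by the construction of the conditional expectation (Lemma~\ref{le:conditional}) and the fact that the countable set $\Psi$ separates $\mc{Z}$, to the assertion that for each $\psi\in\Psi$ the scalar function $g:=\tensor*[^{\psi}]W{}$ is $\mc{A}_0\times\mc{A}'$-measurable almost everywhere, it suffices to establish this for each fixed $\psi\in\Psi$ with probability $1$; the countable intersection over $\Psi$ then handles all $\psi$ simultaneously.

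By Lemma~\ref{le:countable} we may assume $\mc{A}$ and $\mc{A}'$ are countably generated, so that $L^2(\Omega',\mc{A}',\pi')$ is separable. Pick a countable orthonormal basis $\{e_i\}_{i\geq 1}$ of bounded simple functions (e.g.\ via Gram--Schmidt applied to indicators of a countable algebra generating $\mc{A}'$), and set
\[
c_i(x):=\int_{\Omega'} g(x,y)\,e_i(y)\,d\pi'(y),
\]
which is $\mc{A}$-measurable by Fubini (note $|g|\leq\|\psi\|\cdot\|W\|\in L^2(\pi\times\pi')$). The orthonormal expansion gives $g(x,y)=\sum_i c_i(x)e_i(y)$ with convergence in $L^2(\pi\times\pi')$. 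For $\pi$-almost every $x$ we have $g(x,\cdot)\in L^2(\pi')$, so for each $i$ the variables $g(x,Y_k)e_i(Y_k)$ are i.i.d.\ with finite mean $c_i(x)$, and the strong law of large numbers yields
\[
\frac{1}{N}\sum_{k=1}^N g(x,Y_k)\,e_i(Y_k)\xrightarrow[N\to\infty]{}c_i(x)\quad\text{almost surely.}
\]

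The set of pairs $(x,\bar Y)\in\Omega\times(\Omega')^{\mb{N}}$ on which this convergence fails has product measure $0$, so by Fubini, with probability $1$ in $\bar Y$ the convergence holds for $\pi$-a.e.\ $x$; taking countable intersections over $i\in\mb{N}$ and $\psi\in\Psi$ still leaves a probability-$1$ event. On this event the partial sums are $\mc{A}_0$-measurable functions of $x$ by the very definition of $\mc{A}_0$, hence so is their pointwise a.e.\ limit (redefined to $0$ on the exceptional null set), making each $c_i$ an $\mc{A}_0$-measurable function. Consequently $\sum_i c_i(x)e_i(y)$ is $\mc{A}_0\times\mc{A}'$-measurable and coincides with $g$ almost everywhere, which yields the required almost measurability. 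The main technical subtlety is precisely this juggling of exceptional null sets: the SLLN only provides, for each fixed $x$, a null event in $\bar Y$-space, and it is the Fubini interchange together with the countability of both $\Psi$ and the basis $\{e_i\}$ that consolidates these into a single full-probability event in $\bar Y$-space that simultaneously serves almost every $x$, every $i$, and every $\psi$.
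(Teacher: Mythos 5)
Your proof is correct, but it takes a genuinely different route from the paper's. The paper discretizes $\Omega'$ via an increasing sequence of finite $\sigma$-algebras $\mc{A}'_1\subset\mc{A}'_2\subset\cdots$, forms empirical averages $U_{n,m}$ over their atoms, uses the law of large numbers to obtain convergence of the integrals $\int_{A\times A'} \tensor*[^\psi]{U}{_{n,m}}$ and an $L^2$-boundedness estimate, then extracts a weakly convergent subsequence (via Banach--Alaoglu in Hilbert space and a diagonal argument), identifies the weak limit with $E(\tensor*[^\psi]W{}\mid\mc{A}\times\mc{A}'_n)$, and finally invokes L\'evy's Upward Theorem to pass from $\mc{A}'_n$ to $\mc{A}'$. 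Your argument replaces the whole discretize--and--extract machinery with an orthonormal expansion in $L^2(\Omega',\mc{A}',\pi')$: once you know the Fourier coefficients $c_i(x)=\int g(x,y)e_i(y)\,d\pi'(y)$ are (a.e.\ equal to) $\mc{A}_0$-measurable functions -- which drops straight out of the strong law applied to $\frac1N\sum_{k\le N}g(x,Y_k)e_i(Y_k)$ -- the $\mc{A}_0\times\mc{A}'$-measurable version $\sum_i c_i\otimes e_i$ reassembles $g$ in $L^2(\pi\times\pi')$ immediately, and the conditional-expectation characterization of almost-measurability follows. This is more economical: no weak compactness, no subsequences, no martingale convergence theorem. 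One could nitpick two points of exposition -- the infinite series converges only in $L^2$, so to get an everywhere-defined $\mc{A}_0\times\mc{A}'$-measurable representative of $g$ you should pass to a pointwise a.e.\ convergent subsequence of partial sums; and strictly speaking what the SLLN produces is an $\mc{A}_0$-measurable \emph{version} of $c_i$ rather than $c_i$ itself -- but you flag the second explicitly and both are routine, so the argument stands.
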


\begin{proof}
By Lemma \ref{le:countable}, we may assume that both $\mc{A}$ and $\mc{A}'$ are countably generated. Let $\mc{A}_1\subset\mc{A}_2\subset\ldots$ and $\mc{A}'_1\subset\mc{A}'_2\subset\ldots$ be sequences of finite $\sigma$-algebras such that $\sigma(\cup_n\mc{A}_n)=\mc{A}$ and $\sigma(\cup_n\mc{A}'_n)=\mc{A}'$, and let $P_n'$ denote the partition of $\Omega'$ into the atoms of $\mc{A}'_n$. For each $y\in S\in P'_n$ with $\pi'(S)>0$ let
\[
U_{n,m}(x,y):=\frac{1}{m\pi'(S)}\sum_{\substack{j\leq m \\  Y_j\in S}} W(x,Y_j),
\]
whilst 
$U_{n,m}(x,y):=0$ whenever $\pi'(S)=0$. Let $\Psi\subset\Phi$ be a countable dense subset of the predual of $\mc{Z}$.

We first wish to prove that for every $\psi\in\Psi$, $n\geq1$, every $A\in\cup_n\mc{A}_n$ and $A'\in\mc{A}'_n$ we have with probability 1 that
\begin{equation}\label{eqn:random-conv}
\int_{A\times A'}\tensor*[^\psi]{U}{_{n,m}} \,d\pi d\pi' \xrightarrow{m\to\infty} \int_{A\times A'} \tensor*[^\psi]W{} d\pi d\pi'.
\end{equation}
Since $\mc{A}'_n$ is atomic, it is enough to show this for sets $A'=S\in P'_n$ with $\pi'(S)>0$. For every $y_0\in S$ we then have
\[
\int_A \tensor*[^\psi]{U}{_{n,m}}(x,y_0)\, d\pi(x)=\frac{1}{m\pi'(S)}\sum_{\substack{j\leq m \\  Y_j\in S}} \int_A \tensor*[^\psi]W{}(x,Y_j) d\pi(x).
\]
Since $\|W\|$ lies in $L^2\subset L^1$, the function $\tensor*[^\psi]W{}$ lies in $L^1$, hence we may apply the Law of Large Numbers to obtain
\[
\int_A \tensor*[^\psi]{U}{_{n,m}}(x,y_0) \,d\pi(x)\xrightarrow{m\to\infty}\frac{1}{\pi'(S)}\int_{A\times S} \tensor*[^\psi]W{} d\pi d\pi'.
\]
Since both sides are independent of the choice of $y_0\in S$, we may integrate it out over $S$ to obtain equation (\ref{eqn:random-conv}).

We have a countable number of choices for $\psi\in\Psi$, $n\in\mb{N}^+$, $A'\in\mc{A}'$ and $A\in\cup_k\mc{A}_k$, hence with probability 1 equation (\ref{eqn:random-conv}) holds for all $\psi\in\Psi$, all $n\in\mb{N}^+$, $A'\in\mc{A}_n'$ and $A\in\cup_k\mc{A}_k$.
Also, note that
\[
\|U_{n,m}(x,y)\|\leq\frac{1}{m\pi'(S)}\sum_{\substack{j\leq m \\  Y_j\in S}} \|W(x,Y_j)\|,
\]
whence by AM-QM,
\begin{equation*}
\|U_{n,m}(x,y)\|^2\leq\frac{1}{m\pi'(S)^2}\sum_{\substack{j\leq m \\  Y_j\in S}} \|W(x,Y_j)\|^2.
\end{equation*}
For each $y_0\in S\in P_n'$ with $\pi'(S)>0$ we then have
\begin{equation}\label{eqn:ineq}
\int_\Omega \|U_{n,m}(x,y_0)\|^2 \,d\pi(x)\leq \frac{1}{\pi'(S)^2}\frac1m \sum_{\substack{j\leq m \\  Y_j\in S}} \int_\Omega \|W(x,Y_j)\|^2 d\pi(x).
\end{equation}
Again inequality (\ref{eqn:ineq}) is independent of the choice of $y_0\in S$, so we may integrate over it to obtain
\[
\int_{\Omega\times S} \|U_{n,m}(x,y_0)\|^2 \,d\pi(x)d\pi'(y_0)\leq \frac{1}{\pi'(S)}\frac1m \sum_{\substack{j\leq m \\  Y_j\in S}} \int_\Omega \|W(x,Y_j)\|^2 d\pi(x).
\]

Since by assumption $\|W\|$ lies in $L^2$, we may again apply the Law of Large Numbers to the right hand side to obtain that with probability 1
\[
\frac{1}{\pi'(S)}\frac1m \sum_{\substack{j\leq m \\  Y_j\in S}} \int_\Omega \|W(x,Y_j)\|^2 d\pi(x)\xrightarrow{m\to\infty} \frac{1}{\pi'(S)} \int_{\Omega\times S}\|W\|^2 d\pi d\pi'<\infty.
\]

Since there are finitely many $S\in P'_n$ with $\pi'(S)>0$, summing over all such $S$ yields that with probability 1, the sequence $(U_{n,m})_{m\in\mb{N}^+}$ is uniformly bounded in $L^2$-norm for each $n\in\mb{N}^+$.

From now on assume that the choice of the $Y_j$ is such that equation (\ref{eqn:random-conv}) holds for all $\psi\in\Psi$, all $n\in\mb{N}^+$, $A'\in\mc{A}'_n$ and $A\in\cup_n\mc{A}_n$, and that the sequence $(U_{n,m})_{m\in\mb{N}^+}$ is uniformly bounded in $L^2$-norm for each $n\in\mb{N}^+$.
For a fixed $n$, by compactness and since there are countably many elements in $\Psi$, we may by a diagonal argument choose a subsequence $(m_k)_{k\in\mb{N}^+}\subset\mb{N}^+$ such that for each $\psi\in\Psi$ the sequence $(\tensor*[^\psi]U{_{n,m_k}})_{k\in\mb{N}^+}$ converges weakly in $L^2(\mc{A}\times\mc{A}_n',\mb{R})$. Denote the weak limits by $U_{n,\psi}$. Since by construction each $\tensor*[^\psi]U{_{n,m_k}}$ is $\mc{A}_0\times\mc{A}_n'$-measurable, the same is true for the weak limits $U_{n,\psi}$.

By equation (\ref{eqn:random-conv}) we have that for each $n\geq1$ and $\psi\in\Psi$,
\begin{eqnarray*}
\int_{A\times A'} U_{n,\psi} \,d\pi d\pi'&=&\lim_{k\to\infty} \int_{A\times A'} \tensor*[^\psi]U{_{n,m_k}} d\pi d\pi'=\int_{A\times A'} \tensor*[^\psi]W{} d\pi d\pi'\\
&=&\int_{A\times A'} E(\tensor*[^\psi]W{}|\mc{A}\times\mc{A}_n') \,d\pi d\pi'
\end{eqnarray*}
for all $A\in\cup_n\mc{A}_n$ and $A'\in\mc{A}'_n$.
But both functions $U_{n,\psi}$ and $ E(\tensor*[^\psi]W{}|\mc{A}\times\mc{A}_n')$ lie in $L^2(\mc{A}\times\mc{A}_n',\mb{R})$, and since $\cup_n\mc{A}_n$ is dense in $\mc{A}$, the above equality is in fact true for all $A\in\mc{A}$.
This means that the functions $U_{n,\psi}$ and $ E(\tensor*[^\psi]W{}|\mc{A}\times\mc{A}_n')$ represent the same element in $L^2(\mc{A}\times\mc{A}_n',\mb{R})$. Since $U_{n,\psi}$ is $\mc{A}_0\times\mc{A}_n'$-measurable, this in turn means that for every $\psi\in\Psi$
\begin{equation}\label{eqn:upward}
 E(\tensor*[^\psi]W{}|\mc{A}_0\times\mc{A}_n')= E(\tensor*[^\psi]W{}|\mc{A}\times\mc{A}_n')
\end{equation}
in $L^2(\mc{A}\times\mc{A}_n',\mb{R})$. By Levy's Upward Theorem, however, the left hand side of (\ref{eqn:upward}) tends a.e. to $E(\tensor*[^\psi]W{}|\mc{A}_0\times\mc{A}')$ as $n\to\infty$, whilst the right hand side tends a.e. to $E(\tensor*[^\psi]W{}|\mc{A}\times\mc{A}')$.
Thus for each $\psi\in\Psi$ we have that
\[
 E(\tensor*[^\psi]W{}|\mc{A}_0\times\mc{A}')= E(\tensor*[^\psi]W{}|\mc{A}\times\mc{A}')=\tensor*[^\psi]W{}
\]
in $L^2(\mc{A}\times\mc{A}',\mb{R})$, meaning that $\tensor*[^\psi]W{}$ is almost $\mc{A}_0\times\mc{A}'$-measurable for all $\psi\in\Psi$. By Lemma \ref{le:weak*}, and since $\Psi$ is countable, it then follows that $W$ is almost weak-* measurable with respect to $\mc{A}\times \mc{A}'$.

\end{proof}

The following is an immediate consequence of the previous theorem.
\begin{cor}
Let $(\Omega,\mc{A},\pi)$ and $(\Omega',\mc{A}',\pi')$ be probability spaces, and let $W:\Omega\times\Omega'\to\mc{Z}$ be a weak-* measurable function with respect to $\mc{A}\times\mc{A}'$ with $\|W\|\in L^2$. Let further $\mc{A}_0\subset\mc{A}$ be a sub-$\sigma$-algebra. If $W(\cdot,y)$ is weak-* measurable with respect to $\mc{A}_0$ for almost all $x\in\Omega$, then $W$ is almost weak-* measurable with respect to $\mc{A}_0\times\mc{A}'$.
\end{cor}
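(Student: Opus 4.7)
The plan is to apply Lemma \ref{le:random-meas} to a generic i.i.d. sample from $\Omega'$, and then observe that under our hypothesis the random sub-$\sigma$-algebra produced by that lemma automatically sits inside $\mc{A}_0$, so its conclusion upgrades to almost weak-* measurability w.r.t. $\mc{A}_0\times\mc{A}'$. (The statement contains a harmless typo: the hypothesis should read ``for almost all $y\in\Omega'$,'' since $W(\cdot,y)$ varies with $y$.)

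Concretely, I would fix a countable dense set $\Psi\subset\Phi$ and a null set $N\subset\Omega'$ off which $W(\cdot,y)$ is weak-* measurable with respect to $\mc{A}_0$. Next, draw independent $\pi'$-distributed random points $Y_1,Y_2,\ldots$ in $\Omega'$. With probability $1$ none of the $Y_k$ lands in $N$, so each scalar function $\langle\psi,W(\cdot,Y_k)\rangle$ for $\psi\in\Psi$ is $\mc{A}_0$-measurable. Consequently, the random sub-$\sigma$-algebra $\mc{A}_\ast$ of $\mc{A}$ generated (in the sense of Lemma \ref{le:random-meas}) by the family $\{W(\cdot,Y_k)\}_{k\geq 1}$ satisfies $\mc{A}_\ast\subseteq\mc{A}_0$ almost surely.

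Simultaneously, the hypothesis $\|W\|\in L^2$ allows a direct application of Lemma \ref{le:random-meas}, giving that with probability $1$ the function $W$ is almost weak-* measurable with respect to $\mc{A}_\ast\times\mc{A}'$. Intersecting the two full-probability events and picking any realization of $(Y_k)$ in their intersection, and noting that enlarging the first factor of a product $\sigma$-algebra preserves almost weak-* measurability, yields that $W$ is almost weak-* measurable with respect to $\mc{A}_0\times\mc{A}'$.

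There is essentially no obstacle: the corollary is a one-step deduction from the preceding lemma, and the only point requiring any care is translating the pointwise-in-$y$ hypothesis into the measurability of each $W(\cdot,Y_k)$ with respect to $\mc{A}_0$, which is handled by discarding the null set $N$.
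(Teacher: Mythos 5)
Your argument is correct and is exactly the natural filling-in of the paper's one-line claim that this is ``an immediate consequence of the previous theorem'': the paper offers no written proof, and your route --- sample $Y_1,Y_2,\ldots$ i.i.d.\ from $\pi'$, note that almost surely every $Y_k$ lies outside the exceptional null set so the random $\sigma$-algebra $\mc{A}_\ast$ generated by the $W(\cdot,Y_k)$ is contained in $\mc{A}_0$, invoke Lemma~\ref{le:random-meas}, and then use that enlarging the first factor preserves almost measurability (tower property) --- is precisely the intended deduction. The only point worth making explicit, which you correctly flag, is the monotonicity step: if $W=E(W\,|\,\mc{A}_\ast\times\mc{A}')$ a.e.\ and $\mc{A}_\ast\subseteq\mc{A}_0$, then taking the $\mc{A}_\ast\times\mc{A}'$-measurable version $\widetilde W$ and observing it is also $\mc{A}_0\times\mc{A}'$-measurable gives $E(W\,|\,\mc{A}_0\times\mc{A}')=E(\widetilde W\,|\,\mc{A}_0\times\mc{A}')=\widetilde W=W$ a.e.
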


Applying the previous corollary twice, we obtain the following result, which is the key element in our randomization.

\begin{cor}
Let $W\in\mc{W}_{(\Omega,\mc{A},\pi)}$ be a strong graphon, and let $X_1,X_2,\ldots$ be independent random points from $\Omega$. Let $\mc{A}_0\subset\mc{A}$ be the random $\sigma$-algebra generated by the functions $W(\cdot,X_k)$, $k\in\mb{N}^+$. Then with probability 1, $W$ is almost weak-* measurable with respect to $\mc{A}_0\times\mc{A}_0$.
\end{cor}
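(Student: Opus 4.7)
The plan is to combine one application of Lemma~\ref{le:random-meas} with two applications of the preceding unlabeled Corollary, using the symmetry of $W$ to bridge between the two coordinates.

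First I apply Lemma~\ref{le:random-meas} to $W$ viewed as a function on $\Omega\times\Omega$ with $Y_k=X_k$; since $W$ is a graphon, $\|W\|\in L^p$ for every $p<\infty$, and in particular the required $L^2$ hypothesis holds. The lemma then guarantees that, with probability one, $W$ is almost weak-* measurable with respect to $\mc{A}_0\times\mc{A}$. By Fubini this implies that $W(\cdot,y)$ is weak-* measurable with respect to $\mc{A}_0$ for $\pi$-almost every $y\in\Omega$.

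Using the symmetry $W(x,y)=W(y,x)$, it follows that $W(x,\cdot)$ is $\mc{A}_0$-measurable for $\pi$-almost every $x$ as well. A first application of the preceding Corollary---with the two coordinates' roles swapped---then yields that $W$ is almost weak-* measurable with respect to $\mc{A}\times\mc{A}_0$. A second application of the same Corollary, now taking $\mc{A}'=\mc{A}_0$ in the second coordinate and again using that $W(\cdot,y)$ is $\mc{A}_0$-measurable for $\pi$-a.e.\ $y$, produces the desired conclusion that $W$ is almost weak-* measurable with respect to $\mc{A}_0\times\mc{A}_0$.

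The only bookkeeping issue is ensuring that the various exceptional sets---the probability-$1$ event from the lemma, the Fubini null-set, and any null-set on which symmetry of $W$ may fail after a $\pi\times\pi$-null modification---can be amalgamated into a single null-set without affecting the arguments. Since only countably many such sets arise, this is routine rather than a genuine obstacle.
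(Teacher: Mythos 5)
The Fubini step is where the proof breaks. From ``with probability one, $W$ is almost weak-* measurable with respect to $\mc{A}_0\times\mc{A}$'' --- i.e.\ $W=E(W|\mc{A}_0\times\mc{A})$ $\pi\times\pi$-a.e.\ --- Fubini gives only that for $\pi$-a.e.\ $y$ the section $W(\cdot,y)$ agrees $\pi$-a.e.\ with the $\mc{A}_0$-measurable function $E(W|\mc{A}_0\times\mc{A})(\cdot,y)$. That makes $W(\cdot,y)$ \emph{almost} $\mc{A}_0$-measurable for a.e.\ $y$, not $\mc{A}_0$-measurable. The distinction is genuine here: $\mc{A}_0$ is a countably generated sub-$\sigma$-algebra of $\mc{A}$ and is therefore not $\pi$-complete, and the paper is deliberately careful about precisely this point (the Remark after the definition of weak-* measurability, and the very notion of a strong graphon). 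The preceding Corollary, as stated, assumes exact $\mc{A}_0$-measurability of the sections; its proof (via Lemma~\ref{le:random-meas}) requires the random $\sigma$-algebra $\sigma\bigl(W(\cdot,Y_k):k\bigr)$ to lie \emph{inside} $\mc{A}_0$, which fails when the sections are only almost $\mc{A}_0$-measurable. The second application of the Corollary inherits the same problem: after your step four you only have $W$ \emph{almost} $\mc{A}\times\mc{A}_0$-measurable, whereas the Corollary's hypothesis requires weak-* measurability with respect to $\mc{A}\times\mc{A}_0$ itself. Attempting to fix this by replacing $W$ with $E(W|\mc{A}_0\times\mc{A})$ or with $E(W|\mc{A}\times\mc{A}_0)$ always restores one of the two needed properties at the cost of the other, so this is not the null-set amalgamation issue you dismiss at the end; it is an honest gap.

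A clean repair bypasses the Corollary and works with conditional expectations directly. On $L^2(\Omega^2,\mc{A}\times\mc{A},\pi\times\pi)$ the projections $E(\cdot|\mc{A}_0\times\mc{A})$ and $E(\cdot|\mc{A}\times\mc{A}_0)$ commute and compose to $E(\cdot|\mc{A}_0\times\mc{A}_0)$ (verify on products $a(x)b(y)$ and use density). One application of Lemma~\ref{le:random-meas}, combined with the symmetry of $W$, gives with probability one that $E(\tensor*[^\psi]W{}|\mc{A}_0\times\mc{A})=\tensor*[^\psi]W{}$ and $E(\tensor*[^\psi]W{}|\mc{A}\times\mc{A}_0)=\tensor*[^\psi]W{}$ a.e.\ for each $\psi$ in the countable dense set $\Psi$; composing the two yields $E(\tensor*[^\psi]W{}|\mc{A}_0\times\mc{A}_0)=\tensor*[^\psi]W{}$ a.e., and since $\Psi$ separates $\mc{Z}$ this is the claim. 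Alternatively, one can first prove a strengthened version of the preceding Corollary that tolerates sections which are merely almost $\mc{A}_0$-measurable --- which needs the separate observation that $\sigma$-algebras agreeing up to $\pi$-null sets induce identical conditional expectations --- after which your two-application plan does go through.
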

\begin{proof}
The proof of \cite[Corollary 4.3]{BCL} works also in this more general setting, with minor natural modifications.
\end{proof}

Let now $W\in\mc{W}_{(\Omega,\mc{A},\pi)}$ be a strong graphon and $\alpha=(\alpha_1,\alpha_2,\ldots)$ an infinite sequence of points in $\Omega$. Consider the map $x\mapsto \Gamma_\alpha(x)$ defined by
\begin{equation}\label{eqn:tag}
\Gamma_\alpha(x):=((\tensor*[^\psi]W{}(x,\alpha_1))_{\psi\in\Psi},(\tensor*[^\psi]W{}(x,\alpha_2))_{\psi\in\Psi},\ldots)\in \left(\mb{R}^\Psi\right)^{\mb{N}^+}=:\mc{R}.
\end{equation}
Since each $\tensor*[^\psi]W{}(\cdot,\alpha_j)$ is measurable by assumption, $\Gamma_\alpha$ is a measurable map from $\Omega$ into $\mc{R}$ with respect to the standard Borel $\sigma$-algebra $\mc{K}$ on the product space, and thereby defines a push-forward measure $\kappa_\alpha$ on $\mc{K}$ through
\begin{equation}\label{eqn:measure}
\kappa_\alpha(S)=\pi(\Gamma^{-1}_\alpha(S)),
\end{equation}
for $S\in\mc{K}$.
Denote by $\mc{L}$ the completion of $\mc{K}$ with respect to $\kappa_\alpha$, and $\lambda_\alpha$ the extension of $\kappa_\alpha$ to $\mc{L}$.

Let further $W_{\Gamma_\alpha}$ be the push-forward of $W$ under $\Gamma_\alpha$, and denote by $W_\alpha\in\mc{W}_{(\mc{R},\mc{L},\lambda_\alpha)}$ its completion.
\begin{definition}
The graphon $W_\alpha$ is called the \emph{anchored graphon} with respect to the \emph{anchor sequence} $\alpha$. An anchor sequence $\alpha$ is called \emph{regular} if $W=(W_{\Gamma_\alpha})^{\Gamma_\alpha}$ almost everywhere.
\end{definition}
\begin{remark}
Since it is the product of countably many copies of $\mb{R}$, $\mc{R}$ itself is a complete Polish space, hence $W_\alpha$ is always a Lebesguian graphon.
\end{remark}

\begin{lemma}\label{le:regular}
Almost all $\alpha\in\Omega^{\mb{N}^+}$ are regular.
\end{lemma}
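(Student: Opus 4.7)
The plan is to identify regularity with a conditional-expectation statement and then invoke the corollary that immediately precedes the lemma. Equip $\Omega^{\mathbb{N}^+}$ with the product measure $\pi^{\otimes\mathbb{N}^+}$, and view a random anchor sequence $\alpha=(\alpha_1,\alpha_2,\ldots)$ as a sequence of i.i.d.\ $\pi$-distributed random points in $\Omega$. For each such $\alpha$, the map $\Gamma_\alpha$ defined in (\ref{eqn:tag}) generates the sub-$\sigma$-algebra
\[
\mathcal{A}_0:=\Gamma_\alpha^{-1}(\mathcal{K})=\sigma\bigl(\tensor*[^\psi]W{}(\cdot,\alpha_j):\psi\in\Psi,\ j\in\mathbb{N}^+\bigr),
\]
and since $\Psi$ is countable and dense, this is exactly the $\sigma$-algebra obtained from the weak-* evaluations of the $\mathcal{Z}$-valued functions $W(\cdot,\alpha_j)$, i.e.\ the random $\sigma$-algebra appearing in the corollary preceding this lemma.

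First I would apply that corollary: with probability 1 in the choice of $\alpha$, the graphon $W$ is almost weak-* measurable with respect to $\mathcal{A}_0\times\mathcal{A}_0$, which by definition means
\[
W=E(W\mid\mathcal{A}_0\times\mathcal{A}_0)
\]
$\pi\times\pi$-almost everywhere. Next I would invoke the construction used in the proof of Lemma \ref{le:push-forward}: for any measure-preserving $\tau$ the pull-back of its push-forward satisfies $(W_\tau)^\tau=E(W\mid\tau^{-1}(\mathcal{A}')\times\tau^{-1}(\mathcal{A}'))$ almost everywhere. Applied with $\tau=\Gamma_\alpha$ and $\mathcal{A}'=\mathcal{K}$, this yields
\[
(W_{\Gamma_\alpha})^{\Gamma_\alpha}=E(W\mid\mathcal{A}_0\times\mathcal{A}_0)
\]
$\pi\times\pi$-almost everywhere. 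Combining the two displayed equalities gives $W=(W_{\Gamma_\alpha})^{\Gamma_\alpha}$ almost everywhere, which is exactly the definition of $\alpha$ being regular.

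Since the above conclusion holds with probability 1 in $\alpha$, Fubini gives that the set of regular sequences has full $\pi^{\otimes\mathbb{N}^+}$-measure, as required. There is no real obstacle here beyond checking that the $\sigma$-algebra in the corollary coincides with $\Gamma_\alpha^{-1}(\mathcal{K})$, which is immediate from the separability of $\Phi$ and the countability of $\Psi$; all of the work has already been done in establishing the preceding corollary (and the analogous statement in \cite{BCL}) and in the push-forward formula of Lemma \ref{le:push-forward}. Passing from $W_{\Gamma_\alpha}$ to its completion $W_\alpha$ does not affect the statement, as the two agree $\lambda_\alpha$-almost everywhere and the equality in the definition of regularity is only required up to a null set.
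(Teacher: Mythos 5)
Your proof is correct and takes essentially the same route as the paper's: identify $\Gamma_\alpha^{-1}(\mathcal{K})$ with the random $\sigma$-algebra generated by the functions $\tensor*[^\psi]W{}(\cdot,\alpha_j)$, invoke the preceding corollary to obtain $W=E(W\mid\mathcal{A}_0\times\mathcal{A}_0)$ a.e.\ with probability $1$ in $\alpha$, and then read off $(W_{\Gamma_\alpha})^{\Gamma_\alpha}=E(W\mid\mathcal{A}_0\times\mathcal{A}_0)$ from the construction in Lemma~\ref{le:push-forward}. If anything you are slightly more precise than the paper, which cites Lemma~\ref{le:random-meas} at the point where the two-sided corollary $(\mathcal{A}_0\times\mathcal{A}_0$ rather than $\mathcal{A}_0\times\mathcal{A}'$) is what is actually being used; your closing remark about passing to the completion is harmless but unnecessary, since the definition of regularity is stated directly in terms of $W_{\Gamma_\alpha}$.
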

\begin{proof}
Let $\mc{A}_\alpha$ denote the pullback of the $\sigma$-algebra $\mc{K}$ under the map $\Gamma_\alpha$. Since $\Gamma_\alpha$ is measurable, we have $\mc{A}_\alpha\subset\mc{A}$. Also, by construction $\mc{A}_\alpha$ is the smallest $\sigma$-algebra such that all of the functions $\tensor*[^\psi]W{}(\cdot,\alpha_j)$ ($\psi\in\Psi$, $1\leq j$) are measurable. We may thus apply Lemma \ref{le:random-meas} to obtain that for almost all $\alpha$, $W$ is almost weak-* measurable with respect to $\mc{A}_\alpha\times \mc{A}_\alpha$. But by the construction given in the proof of Lemma \ref{le:push-forward} it immediately follows that $W=(W_{\Gamma_\alpha})^{\Gamma_\alpha}$ a.e..
\end{proof}

\section{Densities and coupling of anchored graphons}\label{sect:coupling}

Now that we have introduced random canonical forms for graphons, our aim is to show that on the one hand two weakly isomorphic graphons have essentially the same random canonical form, and on the other hand that this random canonical form is determined by the moment functions. The main helping tool here are densities stemming from so-called partially labeled graphs. These provide the necessary randomness that will act as a bridge between the usual deterministic densities and the random anchored graphons, and allow us to find appropriate couplings between the anchors. We present the necessary notions and properties necessary in the context of decorated graphs, and refer to \cite{Hombook} for futher properties and applications of partial labeling.

 Let $\mf{F}=(F,f)$ be a decorated graph. A \emph{partial labeling} of $\mf{F}$ is an injective map from a subset of the vertices of $F$ into the set $\mb{N}^+$ of positive integers. If the injective map has as image the set $\{1,2,\ldots,k\}$, the partially labeled decorated graph is called \emph{$k$-labeled}. To simplify notation, the case $k=0$ corresponds to unlabeled decorated graphs.

Two partially labeled decorated graphs $\mf{F}_1$ and $\mf{F}_2$ are \emph{isomorphic} if there exists a graph isomorphism between $F_1$ and $F_2$ that preserves both the labels and the decorations. The \emph{product} $\mf{F}_1\mf{F}_2$ of two partially labeled decorated graphs $\mf{F}_1$ and $\mf{F}_2$ is itself a partially labeled decorated graph, defined as follows: take the disjoint union of $F_1$ and $F_2$, then merge the vertices that have identical labels, whilst keeping the labels and decorations as well as any multiple edges that may arise.

Next, we define marginals induced by partial labelings.
Supose $\mf{F}=(F,f)$ is a partially labeled $\Phi$-decorated graph with vertex set $V(F)=\left\{v_1,\ldots,v_k\right\}$, where the vertices $v_1,\ldots,v_r$ are labeled by the positive integers $\ell_1,\ldots,\ell_r$, and the remaining vertices are unlabeled.
Given a graphon $W\in\mc{W}_{(\Omega,\mc{A},\pi)}$ and an infinite sequence $\beta=(\beta_1,\beta_2,\ldots)$ in $\Omega$, for $1\leq j\leq r$ set $X_j:=\beta_{\ell_j}$, and let $X_{r+1},\ldots,X_k$ be independent random points of $\Omega$ from the distribution $\pi$. Then we may define the marginal
\[
t_\beta(\mf{F},W):=E\left(\prod_{v_iv_j\in E(F)}\tensor*[^{f_{v_iv_j}}]W{}(X_i,X_j)\right).
\]

This marginal only depends on the finitely many elements of $\beta$ whose index appears as label, so it will be convenient to sometimes omit the tail of $\beta$ containing no labels.\\ If $\mf{F}_1$ and $\mf{F}_2$ are two $k$-labeled graphs, it can easily be seen that
\[
t(\mf{F}_1\mf{F}_2)=\int_{\Omega^k} t_{x_1\ldots x_k}(\mf{F}_1,W)t_{x_1\ldots x_k}(\mf{F}_2,W) d\pi(x_1)\ldots d\pi(x_k).
\]

We shall show how anchor sequences can be used to prove almost everywhere equality of appropriate graphons. Having to involve densities of labeled \emph{multigraphs} is a technical necessity of the approach introduced in \cite{BCL}, as we wish to prove equality of measures through equality of moments, and higher mixed moments in this context naturally correspond to densities of multigraphs rather than those of simple graphs. As we shall later see, this is not going to be a hindrance.

Given a countably generated dense subspace $\Psi\in\Phi$ of decorations, let $\mc{F}^*_k$ denote the set of $k$-labeled $\Psi$-decorated multigraphs with no edge between labeled vertices, and let $\mc{F}^*:=\cup_{n=0}\mc{F}^*_k$.

\begin{lemma}\label{le:equality}
Let $W\in\mc{W}_{(\Omega,\mc{A},\pi)}$ and $W'\in\mc{W}_{(\Omega',\mc{A}',\pi')}$ be two strong graphons, and let $\alpha$ and $\beta$ be regular anchor sequences in $\Omega$ for $W$ and $\Omega'$ for $W'$, respectively. Suppose that for every partially labeled $\Phi$-decorated multigraph $\mf{F}\in\mc{F}^*$, we have
\[
t_\alpha(\mf{F},W)=t_\beta(\mf{F},W').
\]
Further suppose that for some countable dense subset $\Psi\subset\Phi$ the $p$-norms of $W$ and of its $\alpha_j$-sections satisfy:
\begin{eqnarray}\label{eqn:Carleman}
\sum_{n=1}^\infty \|\tensor*[^\psi]W{}(X,\alpha_j)\|_{2n}^{-1}=\infty & \mbox{ and } & \sum_{n=1}^\infty \|\tensor*[^\psi]W{}\|_{2n}^{-1}=\infty
\end{eqnarray}
for all $j\in\mb{N}^+$ and $\psi\in\Psi$.
Then the anchored graphons $W_\alpha\in\mc{W}_{(\mc{R},\mc{L},\lambda_\alpha)}$ and $W'_\beta\in\mc{W}_{(\mc{R},\mc{L},\lambda'_\beta)}$ satisfy $\lambda_\alpha=\lambda'_\beta$, and $W_\alpha=W'_\beta$ almost everywhere with respect to the common measure.
\end{lemma}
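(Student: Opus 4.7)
The statement splits into two assertions: (a) the pushforward measures $\lambda_\alpha$ and $\lambda'_\beta$ on $\mc{R}$ coincide, and (b) $W_\alpha = W'_\beta$ almost everywhere as functions $\mc{R}^2\to\mc{Z}$. The unifying idea is that carefully chosen partially labeled $\Psi$-decorated multigraphs in $\mc{F}^*$ compute precisely the mixed moments of the coordinate random variables defining these pushforwards; the hypothesis $t_\alpha(\mf{F},W)=t_\beta(\mf{F},W')$ therefore translates directly into equality of joint moments, and the Carleman-type assumptions upgrade this to equality of joint distributions. Note that equality of the completed measures $\lambda_\alpha$ and $\lambda'_\beta$ reduces to equality of the underlying Borel measures $\kappa_\alpha$ and $\kappa'_\beta$ on $\mc{R}$, so we may work throughout with the product Borel $\sigma$-algebra $\mc{K}$.

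\textbf{Step 1: $\lambda_\alpha = \lambda'_\beta$.} Since $\mc{R}=(\mb{R}^\Psi)^{\mb{N}^+}$ carries its product Borel $\sigma$-algebra, a standard monotone class argument reduces the claim to equality of all finite-dimensional marginals. Fix finitely many pairs $(\psi_i,j_i)_{i=1}^r$ of coordinates and nonnegative integers $k_i$. The mixed moment $E\bigl(\prod_{i=1}^r \tensor*[^{\psi_i}]W{}(X,\alpha_{j_i})^{k_i}\bigr)$ under $\pi$ is realized as $t_\alpha(\mf{F},W)$ for the decorated multigraph $\mf{F}$ with one unlabeled vertex $v$, labeled vertices at the distinct positions occurring among $j_1,\ldots,j_r$, and $k_i$ parallel edges between $v$ and the $j_i$-labeled vertex, each decorated by $\psi_i$; such $\mf{F}$ lies in $\mc{F}^*$ because no edge joins two labeled vertices. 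By hypothesis these moments agree with the corresponding ones on the $(W',\beta)$ side. The first Carleman bound supplies univariate moment determinacy for each coordinate random variable $\tensor*[^{\psi_i}]W{}(X,\alpha_{j_i})$; a classical multivariate moment theorem (Petersen-type: moment determinacy of each one-dimensional marginal implies joint determinacy) then yields equality of the full finite-dimensional joint distribution. Therefore all finite-dimensional marginals of $\lambda_\alpha$ and $\lambda'_\beta$ coincide, whence $\lambda_\alpha=\lambda'_\beta$.

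\textbf{Step 2: $W_\alpha = W'_\beta$ a.e.} Since $\Psi$ is countable and separates $\mc{Z}$, it suffices to show $\tensor*[^\psi]{W_\alpha}=\tensor*[^\psi]{W'_\beta}$ a.e.\ on $\mc{R}^2$ for each $\psi\in\Psi$. Fix $\psi$. Regularity of the anchor sequences gives $\tensor*[^\psi]W{}(X,Y)=\tensor*[^\psi]{W_\alpha}(\Gamma_\alpha(X),\Gamma_\alpha(Y))$ $\pi\otimes\pi$-a.e., and similarly for $(W',\beta)$. It is therefore enough to prove that the pushforward of $\pi\otimes\pi$ under the map $(X,Y)\mapsto(\Gamma_\alpha(X),\Gamma_\alpha(Y),\tensor*[^\psi]W{}(X,Y))$ agrees with the corresponding pushforward for $(W',\beta)$ as Borel measures on $\mc{R}^2\times\mb{R}$: once these agree, the third coordinate is almost surely a measurable function of the first two (namely $\tensor*[^\psi]{W_\alpha}$, respectively $\tensor*[^\psi]{W'_\beta}$), which forces those two functions to coincide a.e.\ with respect to the common product measure. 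The mixed moments of this triple are again captured by elements of $\mc{F}^*$: take two unlabeled vertices $u,v$ joined by $k$ parallel $\psi$-decorated edges (encoding the $k$-th power of the third coordinate), and attach each of $u$ and $v$ to selected labeled vertices via $\Psi$-decorated edges with prescribed multiplicities (encoding monomials in the chosen coordinates of $\Gamma_\alpha(X)$ and $\Gamma_\alpha(Y)$). The first Carleman hypothesis bounds each $\Gamma$-coordinate, the second bounds $\tensor*[^\psi]W{}(X,Y)$, and multivariate determinacy closes the argument exactly as in Step 1.

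\textbf{Main obstacle.} The essential technical point is the passage from the univariate Carleman bounds imposed in the hypotheses to multivariate moment determinacy of the joint distributions on $\mb{R}^N$ (resp.\ $\mb{R}^N\times\mb{R}$) arising in Steps 1 and 2; the classical Petersen-type theorem is precisely what bridges this gap, and the two Carleman conditions are calibrated so that every coordinate random variable appearing in either step carries a univariate Carleman bound. A secondary bookkeeping task is to verify that the partially labeled multigraphs we construct really lie in $\mc{F}^*$ (no edges between labeled vertices), which is automatic because the labeled vertices in our construction serve solely as anchor endpoints and are never joined to each other.
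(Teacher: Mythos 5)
Your proposal is correct and follows essentially the same route as the paper's proof: translate mixed moments of the coordinate random variables (and, for the second assertion, of the augmented triple including $\psi$-evaluations of $W(X,Y)$) into densities $t_\alpha(\mf{F},W)$ for suitably constructed multigraphs in $\mc{F}^*$, then invoke a Petersen/Kleiber--Stoyanov-type criterion (Carleman on each marginal implies joint moment determinacy) to upgrade equality of mixed moments to equality of joint distributions, and finally observe that countability of $\Psi$ lets you pass from the scalar evaluations to equality of the $\mc{Z}$-valued functions a.e. The only cosmetic difference is that you treat one test functional $\psi$ at a time in Step 2 whereas the paper bundles all of $\Psi$ into a single sequence-valued random vector; these are logically equivalent.
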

\begin{proof}
Let us first show that $\lambda_\alpha=\lambda'_\beta$. Recall that $\lambda_\alpha$ is the distribution measure of the random variable vector $\left((\tensor*[^\psi]W{}(X,\alpha_1))_{\psi\in\Psi},(\tensor*[^\psi]W{}(X,\alpha_2))_{\psi\in\Psi},\ldots\right)$, where $X\in\Omega$ is a random point with distribution $\pi$, whilst $\lambda_\alpha$ is the distribution measure of the random variable sequence $\left((\tensor*[^\psi]W{}(Y,\alpha_1))_{\psi\in\Psi},(\tensor*[^\psi]W{}(Y,\alpha_2))_{\psi\in\Psi},\ldots\right)$, where $Y\in\Omega'$ is a random point with distribution $\pi'$.

Each of these random variables is real-valued, and we shall first show that their mixed moments are all equal.
Let therefore $(k_{n,\psi})_{n\in\mb{N}^+,\psi\in\Psi}$ be a double-indexed sequence of nonnegative integers, with only finitely many non-zero elements. Let $m\in\mb{N}^+$ be such that $k_{n,\psi}=0$ for all $n>m$, and construct the partially labeled $\Psi$-decorated multigraph $\mf{F}\in\mc{F}^*$ on $m+1$ vertices as follows. First label all but one vertex with the help of the labels $1,\ldots,m$. Then for each $1\leq j\leq m$ and $\psi\in\Psi$ consider the vertex $j$ and the unlabeled vertex, and add $k_{j,\psi}$ edges decorated by $\psi$ between them. Then by construction
\[
E\left(\prod_{n\in\mb{N}^+,\psi\in\Psi}\tensor*[^\psi]W{}(X,\alpha_n)^{k_{n\psi}}\right)=t_\alpha(\mf{F},W).
\]
Similarly
\[
E\left(\prod_{n\in\mb{N}^+,\psi\in\Psi}\tensor*[^\psi]W{}'(Y,\beta_n)^{k_{n,\psi}}\right)=t_\beta(\mf{F},W').
\]

These two are by assumption equal, so all mixed moments are indeed the same. If these were all bounded variables, it would immediately follow that they are equal, as the mixed moments would uniquely determine their joint distribution. In the unbounded case, however, we need an extra property to guarantee uniqueness. By  \cite[Cor. 3a]{KleiberStoyanov},  it is enough that the variables pertaining to $W$ each separately satisfy the Carleman condition, i.e., the family of conditions (\ref{eqn:Carleman}).
Hence $\lambda_\alpha=\lambda'_\beta$.

Now we proceed to show that $W_\alpha(x,y)=W^\prime_\beta(x,y)$ almost everywhere.
To this end we wish to show that the random variables $U_1=(X,Y,W_\alpha(X,Y))$ and $U_2=(X,Y,W'_\beta(X,Y))$ have the same distribution, where $X$ and $Y$ are independent random points in $(\mc{R},\lambda_\alpha)$. 
By definition of the distributions on $(\mc{R},\lambda_\alpha)$ we can generate $X$ and $Y$ by taking the random independent points $X',Y'$ from $(\Omega,\pi)$ and letting $X:=\Gamma_\alpha(X')$ and $Y:=\Gamma_\alpha(Y')$. Since $\alpha$ is regular for $W$, we have with probability 1 that $W_\alpha(X,Y)=W(X',Y')$, and hence
\begin{align*}
U_1=(&(\tensor*[^\psi]W{}(X',\alpha_1))_{\psi\in\Psi},(\tensor*[^\psi]W{}(X',\alpha_2))_{\psi\in\Psi},\ldots,\\
&(\tensor*[^\psi]W{}(Y',\alpha_1))_{\psi\in\Psi},(\tensor[^\psi]W{}(Y'\alpha_2))_{\psi\in\Psi},\ldots,\\
&W(X',Y')).
\end{align*}
For $W'$ we correspondingly take $X''$ and $Y''$ from $(\Omega',\pi')$ instead, and obtain with probability 1 that
\begin{align*}
U_2=(&(\tensor*[^\psi]W{^\prime}(X'',\beta_1))_{\psi\in\Psi},(\tensor*[^\psi]W{^\prime}(X'',\beta_2))_{\psi\in\Psi},\ldots,\\
&(\tensor*[^\psi]W{^\prime}(Y'',\beta_1))_{\psi\in\Psi},(\tensor[^\psi]W{^\prime}(Y''\beta_2))_{\psi\in\Psi},\ldots,\\
&W'(X'',Y'')).
\end{align*}

To compare these two vectors, however, we have to replace their last coordinate with a sequence of real-valued variables with the help of the elements of $\Psi$.

Let therefore
\begin{align*}
\widehat{U}_1:=(&(\tensor*[^\psi]W{}(X',\alpha_1))_{\psi\in\Psi},(\tensor*[^\psi]W{}(X',\alpha_2))_{\psi\in\Psi},\ldots,\\
&(\tensor*[^\psi]W{}(Y',\alpha_1))_{\psi\in\Psi},(\tensor[^\psi]W{}(Y'\alpha_2))_{\psi\in\Psi},\ldots,\\
&(\tensor*[^\psi]W{}(X',Y'))_{\psi\in\Psi})
\end{align*}
and
\begin{align*}
\widehat{U}_2:=(&(\tensor*[^\psi]W{^\prime}(X'',\beta_1))_{\psi\in\Psi},(\tensor*[^\psi]W{^\prime}(X'',\beta_2))_{\psi\in\Psi},\ldots,\\
&(\tensor*[^\psi]W{^\prime}(Y'',\beta_1))_{\psi\in\Psi},(\tensor[^\psi]W{^\prime}(Y''\beta_2))_{\psi\in\Psi},\ldots,\\
&(\tensor*[^\psi]W{^\prime}(X'',Y''))_{\psi\in\Psi}).
\end{align*}

Each mixed moment is determined by nonnegative integers $(a_{n,\psi})_{n\in\mb{N}^+,\psi\in\Psi}$, $(b_{n,\psi})_{n\in\mb{N}^+,\psi\in\Psi}$ and $(c_\psi)_{\psi\in\Psi}$, such that only a finite number of them is non-zero. Assume for instance that $a_{n,\psi}=b_{n,\psi}=0$ for all $n>m$ for some appropriate integer $m$. Let us define a partially labeled $\Psi$-decorated multigraph $\mf{F}\in\mc{F}^*$ on $m+2$ vertices as follows. First label all but two vertices with the labels $1,\ldots,m$, and denote the remaining two by $v_x$ and $v_y$. Then for each $\psi\in\Psi$ add $c_\psi$ parallel edges between $v_x$ and $v_y$, decorating them with $\psi$. Finally for each $1\leq n\leq m$ and $\psi\in\Psi$ add $a_{n,\psi}$ parallel edges decorated with $\psi$ between $v_x$ and the vertex with label $n$, and $b_{n,\psi}$ parallel edges decorated with $\psi$ between $v_y$ and the vertex with label $n$. Then we have
\begin{align*}
E&\left(
\prod_{n\in\mb{N}^+,\psi\in\Psi}\tensor*[^\psi]W{}(X',\alpha_n)^{a_{n,\psi}}\cdot
\prod_{n\in\mb{N}^+,\psi\in\Psi}\tensor*[^\psi]W{}(Y',\alpha_n)^{b_{n,\psi}}\cdot
\prod_{\psi\in\Psi}\tensor*[^\psi]W{}(X',Y')^{c_\psi}
\right)\\
&=t_\alpha(\mf{F},W),
\end{align*}
and \begin{align*}
E&\left(
\prod_{n\in\mb{N}^+,\psi\in\Psi}\tensor*[^\psi]W{^\prime}(X'',\beta_n)^{a_{n,\psi}}\cdot
\prod_{n\in\mb{N}^+,\psi\in\Psi}\tensor*[^\psi]W{^\prime}(Y'',\beta_n)^{b_{n,\psi}}\cdot
\prod_{\psi\in\Psi}\tensor*[^\psi]W{^\prime}(X'',Y'')^{c_\psi}
\right)\\
&=t_\beta(\mf{F},W'),
\end{align*}
which are by assumption equal, and using the conditions in (\ref{eqn:Carleman}), we are done. 
\end{proof}

\begin{remark}\label{re:Carleman}
Note that in condition (\ref{eqn:Carleman}), the second part implies the first for almost all sequences $\alpha$.
Indeed, for $0<c$ let $S_c\subset\Omega$ be the set of points $y\in\Omega$ such that
\[
\sum_{n=1}^\infty \|\tensor*[^\psi]W{}(\cdot,y)\|_{2n}^{-1}\leq c.
\]
Suppose that $\pi(S_c)>0$. We then for $n\geq1$ have by the H\"older inequality that
\begin{eqnarray*}
\|\tensor*[^\psi]W{}\|_{2n}&\geq&\left(\int_{S_c} \left(\int_\Omega \left|\tensor*[^\psi]W{}(x,y)\right|^{2n} \,d\pi(x)\right)\, d\pi(y)\right)^{1/2n}\\
&\geq&\frac{1}{\pi(S_c)^{1-1/2n}}\int_{S_c}  \left(\int_\Omega \left|\tensor*[^\psi]W{}(x,y)\right|^{2n} \,d\pi(x)\right)^{1/2n}\, d\pi(y)\\
&\geq&\frac{1}{\pi(S_c)^{1/2}}\int_{S_c}  \left(\int_\Omega \left|\tensor*[^\psi]W{}(x,y)\right|^{2n} \,d\pi(x)\right)^{1/2n}\, d\pi(y)\\
&=&\frac{1}{\pi(S_c)^{1/2}}\int_{S_c}   \left\|\tensor*[^\psi]W{}(\cdot,y)\right\|_{2n}\, d\pi(y).
\end{eqnarray*}
Thus by convexity
\begin{eqnarray*}
\frac{1}{\|\tensor*[^\psi]W{}\|_{2n}}&\leq&\frac{\sqrt{\pi(S_c)}}{\int_{S_c}   \left\|\tensor*[^\psi]W{}(\cdot,y)\right\|_{2n}\, d\pi(y)}\\
&\leq&\sqrt{\pi(S_c)}\int_{S_c}  \frac{1} {\left\|\tensor*[^\psi]W{}(\cdot,y)\right\|_{2n}}\, d\pi(y),
\end{eqnarray*}
 and so for all $N\in\mb{N}$
\begin{eqnarray*}
\sum_{n=1}^N \frac{1}{\|\tensor*[^\psi]W{}\|_{2n}}&\leq&
\sqrt{\pi(S_c)}\,\sum_{n=1}^N\int_{S_c}  \frac{1} {\left\|\tensor*[^\psi]W{}(\cdot,y)\right\|_{2n}}\, d\pi(y)\\
&=&
\sqrt{\pi(S_c)}\,\int_{S_c} \sum_{n=1}^N \frac{1} {\left\|\tensor*[^\psi]W{}(\cdot,y)\right\|_{2n}}\, d\pi(y)\\
&\leq&
c\sqrt{\pi(S_c)}^3,
\end{eqnarray*}
leading to a contradiction. But if $\pi(S_c)=0$ for all $c=0$, then the sum
\[
\sum_{n=1}^\infty \|\tensor*[^\psi]W{}(\cdot,y)\|_{2n}^{-1}
\]
is infinite for almost all $y\in\Omega$, and therefore also for almost every infinite sequence $\alpha$.
\end{remark}

Our next lemma shows that under a Carleman-type set of conditions, equality of multigraph homomorphism densities, the random canonical forms can be coupled in such a way as to have the corresponding marginals all equal.\\
The reason for us not wanting to have edges between labeled vertices in the test-graphs involved in the coupling is that their absence significantly simplifies and improves the upper bounds on the mixed moments, and thus weaker Carleman-type conditions will suffice. Luckily this is not a restriction here, as the multigraph constructions arising from the mixed moments preserve this property.

\begin{lemma}\label{le:coupling}
Let $W\in\mc{W}_{(\Omega,\mc{A},\pi)}$ and $W'\in\mc{W}_{(\Omega',\mc{A}',\pi')}$ be two strong Lebesguian graphons such that
\[
t(\mf{F},W)=t(\mf{F},W')
\]
for every unlabeled $\Psi$-decorated multigraph $\mf{F}$.
Further suppose that the $p$-norms of $W$
satisfy the Carleman type conditions:
\begin{eqnarray*}
\sum_{n=1}^\infty \|W\|_{2nk}^{-k}=\infty
\end{eqnarray*}
for all $k\in\mb{N}^+$.
Then we can couple sequences $\alpha\in\Omega^{\mb{N}^+}$ with sequences $\beta\in{\Omega'}^{\mb{N}^+}$ such that if $(\alpha,\beta)$ is taken from the joint distribution, then with probability 1
\[
t_\alpha(\mf{F},W)=t_\beta(\mf{F},W')
\]
for every partially labeled $\Psi$-decorated multigraph $\mf{F}\in\mc{F}^*$.
\end{lemma}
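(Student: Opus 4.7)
My plan is to record the full family of marginals as a single random sequence, show that the two induced laws on the common target space coincide via multivariate moment determinacy, and then extract the desired coupling by disintegration. Concretely, enumerate $\mc{F}^*=\{\mf{F}^{(1)},\mf{F}^{(2)},\ldots\}$ and consider the measurable maps
$\Theta:\Omega^{\mb{N}^+}\to\mb{R}^{\mb{N}^+}$ and $\Theta':(\Omega')^{\mb{N}^+}\to\mb{R}^{\mb{N}^+}$, sending $\alpha,\beta$ to the sequences $(t_\alpha(\mf{F}^{(i)},W))_i$ and $(t_\beta(\mf{F}^{(i)},W'))_i$; write $\mu,\mu'$ for the induced pushforward measures on the Polish space $\mb{R}^{\mb{N}^+}$. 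The identity $\int t_x(\mf{F}_1,W)t_x(\mf{F}_2,W)\,d\pi^k(x)=t(\mf{F}_1\mf{F}_2,W)$ recalled just before the statement iterates (the absence of edges between labeled vertices ensures $\mc{F}^*$ is closed under the relevant product/erase-labels operation) to yield
$$\int\prod_{i=1}^r t_\alpha(\mf{F}^{(i)},W)^{n_i}\,d\pi^{\otimes\mb{N}^+}(\alpha)=t(\mf{G},W),$$
where $\mf{G}\in\mc{F}^*_0$ is the unlabeled $\Psi$-decorated multigraph obtained by taking $n_i$ copies of each $\mf{F}^{(i)}$, merging vertices with common labels, and then erasing the labels. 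The analogous identity for $W'$ combined with the density hypothesis shows that $\mu$ and $\mu'$ share all finite mixed moments.

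To upgrade this to $\mu=\mu'$ I apply \cite[Cor.~3a]{KleiberStoyanov} to every finite-dimensional projection: it suffices to check that each coordinate $T_i(\alpha):=t_\alpha(\mf{F}^{(i)},W)$ satisfies the univariate Carleman condition. Let $|E_i|$ denote the edge count of $\mf{F}^{(i)}$ and set $C_i:=\prod_{e\in E_i}\|f_e\|_\Phi$. By the triangle inequality inside the defining integral and generalized H\"older with common exponent $n|E_i|$ on the $n$-fold glued graph,
$$\|T_i\|_n^n\leq C_i^n\,\|W\|_{n|E_i|}^{n|E_i|},$$
which rearranges to $\|T_i\|_{2n}\leq C_i\|W\|_{2n|E_i|}^{|E_i|}$ and hence
$$\sum_{n=1}^\infty\|T_i\|_{2n}^{-1}\geq C_i^{-1}\sum_{n=1}^\infty\|W\|_{2n|E_i|}^{-|E_i|}=\infty$$
by the assumed Carleman family applied with $k=|E_i|$. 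Every finite-dimensional marginal of $\mu$ is therefore moment-determinate, so $\mu=\mu'$ on $\mb{R}^{\mb{N}^+}$.

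Since $W$ and $W'$ are Lebesguian, $\Omega^{\mb{N}^+}$ and $(\Omega')^{\mb{N}^+}$ are standard, so the regular conditional kernels $\pi^{\otimes\mb{N}^+}(\cdot\mid\Theta=z)$ and $(\pi')^{\otimes\mb{N}^+}(\cdot\mid\Theta'=z)$ exist. Drawing $z\sim\mu=\mu'$ and then $\alpha,\beta$ independently from these conditionals produces a coupling under which $\Theta(\alpha)=z=\Theta'(\beta)$ almost surely, which is precisely the statement that $t_\alpha(\mf{F},W)=t_\beta(\mf{F},W')$ holds simultaneously for every $\mf{F}\in\mc{F}^*$ with probability $1$. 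The most delicate step is the Carleman verification: the fact that gluing $n$ copies of a test graph multiplies the edge count by $n$ is exactly what forces the hypothesis to control $\|W\|_{2nk}^{-k}$ for every $k\in\mb{N}^+$, and not merely for $k=1$.
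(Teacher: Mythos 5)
Your proof is correct, but takes a genuinely different route from the paper's. The paper constructs the coupling recursively, one anchor coordinate at a time: assuming a coupling of $(\gamma_1,\ldots,\gamma_k)\in\Omega^k$ with $(\delta_1,\ldots,\delta_k)\in(\Omega')^k$, it considers the \emph{conditional} random vectors $C=(t_{\gamma_1\ldots\gamma_k X}(\mf{F},W))_{\mf{F}\in\mc{F}^*_{k+1}}$ and $D$ for fresh independent $X\sim\pi$, $Y\sim\pi'$, matches mixed moments, verifies Carleman for each coordinate of $C$ conditionally on $\gamma_1,\ldots,\gamma_k$, and invokes \cite[Lemma 6.2]{BCL} (coupling of identically distributed Polish-space variables) to extend the coupling to $\Omega^{k+1}\times(\Omega')^{k+1}$; iterating gives the full infinite coupling. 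You instead work with the full infinite vector $\Theta(\alpha)=(t_\alpha(\mf{F}^{(i)},W))_i$ at once, match all mixed moments in one identity, apply Kleiber--Stoyanov to the finite-dimensional projections to conclude $\mu=\mu'$ on $\mb{R}^{\mb{N}^+}$, and then produce the coupling by disintegrating $\pi^{\otimes\mb{N}^+}$ and $(\pi')^{\otimes\mb{N}^+}$ against the common pushforward law. The substantive difference in the analytic step is that your Carleman bound $\|T_i\|_{2n}\leq C_i\|W\|_{2n|E_i|}^{|E_i|}$ is unconditional, whereas the paper's bound carries a random constant $\ms{C}$ depending on the already-chosen $\gamma_j$ through the sections $\int_\Omega \|W(\gamma_j,\cdot)\|\,d\pi$, finite only for $\pi$-a.e.\ anchor; the restriction to $\mc{F}^*$ (no edges between labeled vertices) is what keeps $\ms{C}$ from involving pointwise values $W(\gamma_a,\gamma_b)$ that could be infinite. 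Your version sidesteps the random constant entirely. Your appeal to disintegration does need proper regular conditional kernels (the conditional law given $\Theta=z$ concentrated on $\Theta^{-1}(z)$ for a.e.\ $z$), which holds because the Lebesguian hypothesis makes $\Omega^{\mb{N}^+}$ standard Borel; the paper avoids disintegration but needs the iterated coupling lemma and a limiting (Kolmogorov-extension) step to pass from $\Omega^{k}$ to $\Omega^{\mb{N}^+}$. Both arguments ultimately rest on the same two ingredients (generalized H\"older driving Carleman, and multivariate moment determinacy), and your one-shot formulation is arguably cleaner.
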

\begin{proof}
We shall recursively define a coupling of sequences $\gamma\in\Omega^k$ and $\delta\in{\Omega'}^k$ such that almost surely we for all $\mf{F}\in\mc{F}^*_k$ have that $t_{\gamma}(\mf{F},W)=t_{\delta}(\mf{F},W')$. This is trivial to do for $k=0$. Let us now assume we have such a coupling for sequences of length $k$, and let $(\gamma_1,\ldots,\gamma_k)$ and $(\delta_1,\ldots,\delta_k)$ be chosen from this coupled distribution. Let further $X$ be a random point from $(\Omega,\pi)$, and $Y$ a random point from $(\Omega',\pi')$, and define the random variables
\[
C:=(t_{\gamma_1\ldots\gamma_k X}(\mf{F},W))_{\mf{F}\in\mc{F}^*_{k+1}}
\]
and
\[
D:=(t_{\delta_1\ldots\delta_k Y}(\mf{F},W'))_{\mf{F}\in\mc{F}^*_{k+1}}
\]
with values in $\mb{R}^{\mc{F}^*_{k+1}}$. Our aim is to show that they have the same distribution.

To prove that the joint distributions are equal, we first show that their mixed moments coincide, and then prove that the coordinates of $C$ satisfy the Carleman condition.

Let $m\in\mb{N}^+$, $\mf{F}_1,\ldots,\mf{F}_m\in\mc{F}^*_{k+1}$, and let $q_1,\ldots,q_m$ be non-negative integers. Then for $1\leq j\leq m$ define $\mf{F}_j^{q_j}\in\mc{F}^*_{k+1}$ as the $q_j$-fold product of $\mf{F}_j$ with itself. Let further $\mf{F}\in\mc{F}^*_k$ be defined as the product $\mf{F}_1^{q_1}\ldots \mf{F}_m^{q_m}$ with label $k+1$ removed from the corresponding vertex. Then we have that the corresponding moment of $C$ satisfies
\[
E\left(\prod_{j=1}^m t_{\gamma_1\ldots\gamma_k X}(\mf{F}_j,W)^{q_j}\right)=E\left(t_{\gamma_1\ldots\gamma_k X} (\mf{F}_1^{q_1}\ldots \mf{F}_m^{q_m},W)\right)=t_{\gamma_1\ldots\gamma_k}(\mf{F},W).
\]
Similar arguments yield that the corresponding mixed moment of $D$ is $t_{\delta_1\ldots\delta_k}(\mf{F},W')$, which by hypothesis is the same.

We now have to show that each coordinate of $C$ satisfies the Carleman condition. Let $\mc{F}^*_{k+1}\ni\mf{F}=(F,f)$ and consider
$
C_\mf{F}(X):=t_{\gamma_1\ldots\gamma_k X}(\mf{F},W).
$
Since $F$ does not have any edges between labeled vertices, we have
\begin{align*}
C_\mf{F}(X)&=&\int &\prod_{a=k+2}^{|V(F)|} \left(\prod_{i=1}^{\mr{mult}(v_{k+1}v_{a})} \tensor*[^{f_{(v_{k+1}v_{a})_i}}]W{}(X,x_a) \right)
\prod_{a,b=k+2}^{|V(F)|} \left(\prod_{i=1}^{\mr{mult}(v_{a}v_{b})} \tensor*[^{f_{(v_{a}v_{b})_i}}]W{}(x_a,x_b) \right)\\
&&&\prod_{\substack{1\leq j\leq k\\k+2\leq a\leq |V(F)|}} \left(\prod_{i=1}^{\mr{mult}(v_{j}v_{a})} \tensor*[^{f_{(v_{j}v_{a})_i}}]W{}(\gamma_j,x_a) \right)\,dx_{k+2}\ldots dx_{|V(F)|}
\end{align*}
For almost all anchor sequences $\gamma$ we have that $\int_{\Omega} W(\gamma_j,x) dx$ is finite for all $j$. By the H\"older inequality (using that $\pi(\Omega)=1$) we thus have
\[
|C_\mf{F}(X)|\leq  \ms{C}\prod_{a=k+2}^{|V(F)|}\prod_{i=1}^{\mr{mult}(v_{k+1}v_{a})}  \left(\int_\Omega\left\|W{}(X,x_a)\right\|^{|E(F)|}\,dx_a\right)^{\frac{1}{|E(F)|}},
\]
where $\ms{C}$ is a finite constant depending on $W$, $\mf{F}$ and $\gamma_1,\ldots,\gamma_k$. Consequently, by further applications of H\"older's inequality we have
\begin{eqnarray*}
\|C_\mf{F}\|_n^n&\leq& \ms{C}^n\int_\Omega \prod_{a=k+2}^{|V(F)|}\prod_{i=1}^{\mr{mult}(v_{k+1}v_{a})}  \left(\int_\Omega \left\|W(X,x_a) \right\|^{|E(F)|}\,dx_a\right)^{\frac{n}{|E(F)|}} dX\\
&\leq&\ms{C}^n\prod_{a=k+2}^{|V(F)|}\prod_{i=1}^{\mr{mult}(v_{k+1}v_{a})} \left( \int_\Omega\left(\int_\Omega \left\|W(X,x_a) \right\|^{|E(F)|}\,dx_a\right)^{\frac{n|E(F)|}{|E(F)|}}\,dX\right)^{\frac{1}{|E(F)|}}\\
&=&\ms{C}^n\prod_{a=k+2}^{|V(F)|}\prod_{i=1}^{\mr{mult}(v_{k+1}v_{a})} \left( \int_\Omega\left(\int_\Omega \left\|W(X,x_a) \right\|^{|E(F)|}\,dx_a\right)^{n}\,dX\right)^{\frac{1}{|E(F)|}}\\
&\leq&\ms{C}^n\prod_{a=k+2}^{|V(F)|}\prod_{i=1}^{\mr{mult}(v_{k+1}v_{a})} \left( \int_\Omega\int_\Omega \left\|W(X,x_a) \right\|^{n|E(F)|}\,dx_a\,dX\right)^{\frac{1}{|E(F)|}}\\
&=&\ms{C}^n\prod_{a=k+2}^{|V(F)|}\prod_{i=1}^{\mr{mult}(v_{k+1}v_{a})} \left\|W\right\|_{n|E(F)|}^{n}
\leq\ms{C}^n \left\|W\right\|_{n|E(F)|}^{n|E(F)|}.
\end{eqnarray*}
Therefore with $K:=|E(F)|$ we have
\[
\sum_{n=1}^\infty\|C_\mf{F}\|_{2n}^{-1}\geq \frac{1}{\ms{C}} \sum_{n=1}^\infty\|W\|_{2nK}^{-K}=\infty,
\]
and so the Carleman condition is indeed satisfied.

Thus $C$ and $D$ have the same distribution, and by \cite[Lemma 6.2]{BCL} we can then couple $X$ and $Y$ so that with probability $1$ we have $C=D$. Thus there exist random variables $X'$ from $(\Omega,\pi)$ and $Y'$ from $(\Omega',\pi')$ such that the joint distribution of $(X',Y')\in(\Omega,\Omega')$ satisfies with probability $1$ that
\[
t_{\gamma_1\ldots\gamma_k X'}(\mf{F},W)=t_{\delta_1\ldots\delta_k Y'}(\mf{F},W')
\]
for every $\mf{F}\in\mc{F}_{k+1}$. This extends our coupling to one between $\Omega^{k+1}$ and ${\Omega'}^{k+1}$. Iterating, we obtain the desired coupling between $\Omega^{\mb{N}^+}$ and ${\Omega'}^{\mb{N}^+}$.
\end{proof}

Our final lemma is to show that graphons having equal simple graph densities also have equal multigraph densities, and hence no generality was lost in the assumptions of the previous results. Note that labeled graphs play in the below proof a different role than above, and as such it is not an issue that we here allow (multiple) edges between labeled vertices.

\begin{lemma}\label{le:multigraph}
Let $W_1\in\mc{W}_{(\Omega_1,\mc{A}_1,\pi_1)}$ and $W_2\in\mc{W}_{(\Omega_2,\mc{A}_2,\pi_2)}$ be two countably generated graphons, and assume that $t(\mf{F},W_1)=t(\mf{F},W_2)$ for every simple $\Phi$-decorated graph $\mf{F}$. Then $t(\mf{F},W_1)=t(\mf{F},W_2)$ for every $\Phi$-decorated multigraph $\mf{F}=(F,f)$.
\end{lemma}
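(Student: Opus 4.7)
The approach is to reduce multigraph densities to simple graph densities through an approximation procedure that exploits the countable generation of the underlying $\sigma$-algebras.

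First, apply Corollary \ref{cor:Lebesgue} to reduce to the case where both $W_1$ and $W_2$ are almost twin-free strong Lebesguian graphons; this reduction proceeds via weak isomorphisms and therefore preserves both simple and multigraph densities. Fix a multigraph $\mf{F}=(F,f)$ with $k$ vertices. Labeling all $k$ vertices and expanding the defining product, one has
\[
t(\mf{F},W) = \int_{\Omega^k} \prod_{e\in E(F)} f_e\bigl(W(\alpha_{u_e},\alpha_{v_e})\bigr) \, d\pi^{\otimes k}(\alpha).
\]
Whenever no two edges of $F$ share the same pair of endpoints, the integrand is exactly the product corresponding to a simple $\Phi$-decorated graph, and the hypothesis directly yields $t(\mf{F},W_1)=t(\mf{F},W_2)$; the only genuine issue is therefore the multi-edges.

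For each multi-edge of multiplicity $m\ge 2$ at a pair $(u,v)$, the plan is to split the endpoint $v$ into $m$ distinct copies $v^{(1)},\ldots,v^{(m)}$, replacing the $m$ parallel edges at $(u,v)$ by the $m$ single edges $(u,v^{(r)})$; the resulting labeled simple graph $\mf{F}^{\mr{split}}$ has a simple graph density, so it is equal for $W_1$ and $W_2$, but differs from $t(\mf{F},W)$ because the copies $v^{(r)}$ are integrated independently rather than identified with $v$. To recover the multi-edge density, one conditions each $v^{(r)}$ to lie in the same atom as $v$ of a refining sequence of finite sub-$\sigma$-algebras $\mc{A}_n\subset\mc{A}$; since $\mc{A}$ is countably generated and, by the almost twin-free property, atoms of $\mc{A}_n$ can be separated using only countably many evaluations $\psi(W(\cdot,y))$ at auxiliary anchors $y\in\Omega$, the required ``same atom'' indicators are approximated in $L^p$ by polynomials in such evaluations at \emph{distinct} auxiliary anchors. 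When expanded, each term becomes a finite combination of simple $\Phi$-decorated graph densities of $W$ on an enlarged vertex set, and the hypothesis yields equality of every such term across $W_1$ and $W_2$. Summing and passing to the limit $n\to\infty$ via L\'evy's upward theorem, with $L^p$-dominated convergence justified by $\|W\|\in L^p$ for all finite $p$, recovers $t(\mf{F},W_1)=t(\mf{F},W_2)$.

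The main obstacle is precisely the polynomial approximation step: one must represent the indicator of a refining atom as a limit of polynomials in weak-* evaluations of $W$ at distinct auxiliary anchors, uniformly enough to ensure $L^1$-convergence after integration against the integrands $\prod_e f_e(W(\alpha_{u_e},\alpha_{v_e}))$. Here the almost twin-free assumption (so that rows $W(x,\cdot)$ are a.e.\ distinct), the separability of $\Phi$ through the countable dense subset $\Psi$, and the countable generation of $\mc{A}$ must all be made to work in concert; the careful bookkeeping of how many distinct auxiliary anchors are required at each refinement level, together with the dominated-convergence justification, forms the bulk of the technical work.
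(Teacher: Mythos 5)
Your approach is genuinely different from the paper's, and it contains a gap that is not merely technical bookkeeping but appears structural.

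The paper argues by induction on the number of parallel edges. Given a pair of vertices $v_i,v_j$ joined by more than one edge with a $\varphi$-decorated edge among them, it removes that edge to form $\mf{F}'$ and compares $\mf{F}=\mf{F}^1$ with the family $\mf{F}^k$ obtained by inserting a path of length $k$ (each edge $\varphi$-decorated) between $v_i$ and $v_j$. For $k>1$ these have fewer parallel edges, hence equal densities by induction. Writing $\tensor*[^{\varphi}]W{}$ as a self-adjoint Hilbert--Schmidt operator with spectral decomposition, $t(\mf{F}^k,W)$ becomes a sum $\sum a_{n,m}\lambda_{n,m}^k$ with $k$-independent coefficients; the asymptotics of this power sum as $k\to\infty$ force the coefficients to match eigenvalue by eigenvalue, and one then reads off equality at $k=1$ as well. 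The whole argument lives inside the observed family of densities.

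Your plan instead splits a multi-edge endpoint $v$ into copies $v^{(1)},\ldots,v^{(m)}$ and tries to glue them back together by an $L^p$ approximation of the ``same atom'' indicator. The unresolved step is exactly the one you flag as the main obstacle, and it is not clear it can be resolved. The atoms of $\mc{A}_n$ are subsets of $\Omega$; the corresponding indicator $\mathds{1}_{A_i}(\alpha_{v^{(r)}})$ is a function of the sample point, and to approximate it by polynomials in evaluations $\psi(W(\alpha_{v^{(r)}},y))$ you would need the auxiliary anchors $y$ to be \emph{fixed} points of $\Omega$. But the hypothesis only gives equality of $t(\mf{F},W_1)$ and $t(\mf{F},W_2)$, i.e., quantities in which every vertex variable is integrated out; there is no access to evaluations at fixed anchors. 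If you instead integrate over the auxiliary vertices too (which is what ``graph densities on an enlarged vertex set'' would mean), you have averaged over the very parameter that was supposed to single out an atom, and the approximation of an indicator is lost. Moreover, the use of Corollary~\ref{cor:Lebesgue} and the almost-twin-free reduction is unnecessary baggage here: that corollary preserves densities, but it does not supply the missing anchors, and the lemma as stated needs neither Lebesgue structure nor twin-freeness (indeed the paper proves it directly, and even remarks that countable generation is not really needed by Lemma~\ref{le:countable}). As written, the proposal does not yield a proof; to rescue it one would have to replace the indicator-approximation step with something that stays entirely inside integrated graph densities, which is precisely what the spectral/path argument of the paper achieves.
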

\begin{proof}
We shall proceed by induction on the number of parallel edges in $F$. The base case is given by the assumption. Let $v_i$ and $v_j$ be two vertices connected by more than one edge, and let $\varphi\in\Phi$ be the decoration on one of them. Denote by $\mf{F}'$ the decorated multigraph obtained by deleting one $\varphi$-decorated edge between $v_i$ and $v_j$. Let further $\mf{F}^k$ denote the decorated multigraph obtained by adding a path of length $k$ between $v_i$ and $v_j$ in $\mf{F}'$, decorating each edge in the path with $\varphi$. Thus $\mf{F}^1=\mf{F}$, but for each $k>1$ the multigraph $\mf{F}^k$ has fewer parallel edges than $\mf{F}$. Hence by the inductive assumption we have that $t(\mf{F}^k,W_1)=t(\mf{F}^k,W_2)$ for each $k>1$. Let us now label all the multigraphs $\mf{F}^k$ and $\mf{F}'$ such that $v_i$ receives the label $1$ whilst $v_j$ receives the label $2$. Then $\mf{F}^k$ is the product of $\mf{F}'$ with the path $\mf{P}_{k+1}$ of length $(k+1)$ with its two endpoints labeled $1$ and $2$ respectively and each edge decorated with $\ph$, and we may write
\[
t(\mf{F},W_1)=\int_{\Omega_1^2} W(x,y)t_{xy}(\mf{F}',W_1)d\pi_1(x)d\pi_1(y),
\]
and
\[
t(\mf{F}^k,W_1)=\int_{\Omega_1^2} t_{xy}(\mf{P}_{k+1},W)t_{xy}(\mf{F}',W_1)d\pi_1(x)d\pi_1(y).
\]

Note that
\[
t_{xy}(\mf{P}_{k+1},W)=\int_{\Omega_1^{k-1}} \tensor*[^{\varphi}]W{}(x,x_1)\cdots\tensor*[^{\varphi}]W{}(x_{k-1},y) d\pi(x_1)\ldots d\pi(x_{k-1}).
\]

Since $\tensor*[^{\varphi}]W{_1}\in L^2(\Omega_1^2,\mb{R})$, it is a self-adjoint Hilbert-Schmidt integral operator on $L^2(\Omega_1,\mb{C})$, and thus has a spectral decomposition

\begin{equation*}
\tensor*[^{\varphi}]W{_1}(x,y)=\sum_{n,m=1}^{\infty} \lambda_{n,m} \zeta_n(x)\zeta_m(y)
\end{equation*}
in the $L^2$ sense, where $(\zeta_n)$ is an orthonormal system in $L^2(\Omega_1,\mb{C})$.

Then we obtain by induction on $k$ that for every $k>1$,
\[
t_{xy}(\mf{P}_{k+1},W_1)=\sum_{n,m=0}^\infty \lambda_{n,m}^k\zeta_n(x)\zeta_m(y)
\]
in $L^2$, whereby
\[
t(\mf{F}^k,W_1)=\sum_{n,m=1}^\infty \lambda_n^k\int_{\Omega^2}\zeta_n(x)\zeta_m(y)t_{xy}(\mf{F}',W_1) d\pi_1(x)d\pi_1(y),
\]
since $t_{xy}(\mf{F}',W_1)$ lies in $L^2$ by the assumption on $W_1$.

Similarly, with the spectral decomposition
\[
\tensor*[^{\varphi}]W{_2}(x,y)=\sum_{n,m=1}^{\infty} \mu_{n,m} \eta_n(x)\eta_m(y),
\]
we obtain
\begin{equation}\label{eqn:spectral}
0=t(\mf{F}^k,W_1)-t(\mf{F}^k,W_2)=\sum_{n,m=1}^\infty a_{n,m}\lambda_{n,m}^k-b_{n,m}\mu_{n,m}^k
\end{equation}
for every $k\geq 2$, where the parameters
\[
a_{n,m}:=\int_{\Omega_1^2} \zeta_n(x)\zeta_m(y)t_{xy}(\mf{F}',W_1) d\pi_1(x)d\pi_1(y)
\]
and
\[
b_{n,m}:=\int_{\Omega_2^2} \eta_n(x)\eta_m(y)t_{xy}(\mf{F}',W_2) d\pi_2(x)d\pi_2(y)
\]
are independent of $k$.

Since each non-zero eigenvalue has finite multiplicity and the only possible accumulation point is $0$, the asymptotic behaviour of the right hand side in (\ref{eqn:spectral}) dictates that all the terms have to cancel, i.e., for each $c\in\mb{R}\backslash\{0\}$ we have that
\[
\sum_{\lambda_{n,m}=c} a_{n,m}=\sum_{\mu_{n,m}=c} b_{n,m}.
\]

Then
\[
t(\mf{F},W_1)=\sum_{n,m=1}^\infty \lambda_{n,m}\int_{\Omega^2}\zeta_n(x)\zeta_m(y)t_{xy}(\mf{F}',W_1) d\pi_1(x)d\pi_1(y)=\sum_{n,m=1}^\infty a_{n,m}\lambda_{n,m},
\]
similarly
\[
t(\mf{F},W_2)=\sum_{n,m=1}^\infty b_{n,m}\mu_{n,m},
\]
and the claim follows.
\end{proof}

\begin{remark}
By Lemma \ref{le:countable}, countable generation is not actually needed.
\end{remark}

\section{Proof of main theorem}

Having extended the intermediate steps of \cite{BCL} to the significantly more general context of our investigations, we are now ready to bring together the elements of the previous sections to prove our main result.

\begin{proof}[\textbf{Proof of Theorem \ref{thm:main}}]

Part (i): By Lemma \ref{le:multigraph}, equality of simple graph densities implies equality of multigraph densities, and thus we may apply Lemma \ref{le:coupling} to our two graphons.
If we choose the anchor sequences $\alpha\in\Omega^{\mb{N}^+}$ and $\beta\in{\Omega'}^{\mb{N}^+}$ from the joint distribution given by Lemma \ref{le:coupling}, almost all such choices satisfy
\[
t_\alpha(\mf{F},W)=t_\beta(\mf{F},W')
\]
for every partially labeled $\Psi$-decorated multigraph $\mf{F}\in\mc{F}^*$, and but for a further null-set they yield anchor sequences that are regular (Lemma \ref{le:regular}). Hence, taking into consideration Remark \ref{re:Carleman}, we can choose sequences $\alpha$ and $\beta$ that satisfy the conditions of Lemma \ref{le:equality}.
Consequently the anchored graphons $W_\alpha$ and $W_\beta'$ are isomorphic mod $0$ through the identity map. 
If we now could show that $\Gamma_\alpha$ is an isomorphism mod $0$ between $W$ and $W_\alpha$, and that similarly $\Gamma_\beta$ is an isomorphism mod $0$ between $W'$ and $W'_\beta$, our proof would be complete. Due to symmetry we shall only show the first isomorphism.

First note that since $(\Omega,\mc{A},\pi)$ is a Lebesgue space, the mapping $\Gamma_\alpha$ is not only measurable and measure preserving as a mapping $(\Omega,\mc{A},\pi)\to (\mc{R},\mc{K},\kappa_\alpha)$, but also as a mapping $(\Omega,\mc{A},\pi)\to (\mc{R},\mc{L},\lambda_\alpha)$.
Let 
\[
S:=\{x\in\Omega\colon W_\alpha(\Gamma_\alpha(x),\Gamma_\alpha(y))=W(x,y) \mbox{ for almost all }y\}.
\]
Since $\alpha$ is a regular anchor sequence, we have $\pi(S)=1$. Also, because $W$ is almost twin-free, we can find a null-set $N\subset\Omega$ such that each twin-class of $W$ has at most one point in $T:=S\backslash N$. Let $\Gamma_\alpha'$ be the restriction of $\Gamma_\alpha$ to $T$. Then it can easily be seen that $T$ is injective. By \cite[Section 2.5]{Rohlin}, injective measure preserving maps between Lebesgue spaces have an almost everywhere defined measurable inverse. Thus $\Gamma_\alpha'$ is an isomorphism mod $0$, and then so is $\Gamma_\alpha$.

Part (ii):
First, by Corollary \ref{cor:Lebesgue} we can find two almost twin-free strong Lebesguian graphons $U\in\mc{W}_{(O,\mc{B},\rho)}$ and $U'\in\mc{W}_{(O',\mc{B}',\rho')}$ and corresponding weak isomorphisms $\gamma$ and $\gamma'$ from $W$ and $W'$ to $U$ and $U'$, respectively. By part (i) we then have that $U$ and $U'$ are isomorphic mod $0$, hence for some measure preserving map $\eta:O\to O'$ we have $U=(U')^\eta$. Now let $\mu:\Omega\to O'$ be defined through $\mu(x):=\eta(\gamma(x))$. Then clearly $W=(U')^\mu$ almost everywhere, and the maps $\mu$ and $\gamma'$ are measure preserving from the completions of $W$ and $W'$, respectively, into $U'$.

\end{proof}

\section{A moment-indeterminate graphon}\label{Section:ex}

In this last section we wish to provide an example of two real-valued graphons that possess the same homomorphism densities, but are not weakly isomorphic. In fact, in our example they are not inducing the same probability measure on $\mb{R}$, so they are even distinguishable when forgetting about the geometry coming from the underlying product space.

Let $\sigma$ and $\tau$ be two probability distributions on $\mb{N}$ with
finite moments and having the same moments (such distributions exist,
see e.g. \cite[Cor. 3.4]{Ped}). Denote their n-th moments by $M_n$ $(n\geq0)$.\\
Let further $\{S_i\}_{i\in\mb{N}}$ and $\{T_j\}_{j\in\mb{N}}$ be two partitions of $[0,1]$ into measurable sets such that $\lambda(S_i)=\sigma(\{i\})$ and $\lambda(T_j)=\tau(\{j\})$ for all $i,j\in\mb{N}$. Consider the functions $f_\sigma, f_\tau: [0,1]\rightarrow\mb{R}$ defined by
\begin{eqnarray*}
f_\sigma(x):=n_x & \mbox{ whenever } & x\in S_{n_x},\\
f_\tau(x):=m_x & \mbox{ whenever } & x\in T_{m_x},
\end{eqnarray*}
respectively, and let $W_\sigma, W_\tau: [0,1]^2\rightarrow\mb{R}$ be defined by $W_\sigma(x,y):=f_\sigma(x)f_\sigma(y)$ and $W_\tau(x,y)=f_\tau(x)f_\tau(y)$, respectively.\\
Let $\mf{F}$ be an $\mb{R}$-decorated graph with each edge decorated with $1$. By linearity, it is enough to show that $t(\mf{F},W_\sigma)=t(\mf{F},W_\tau)$ for every such $\mf{F}$. Let the elements of $V(F)$ be denoted by $v_1,v_2,\ldots,v_k$, and let $d_i$ denote the degree of vertex $v_i$. It can then easily be seen that we have
\begin{eqnarray*}
t(\mf{F},W_\sigma)&=&\int\limits_{x_1,\ldots,x_k\in[0,1]} \prod_{v_iv_j\in E(F)} W(x_i,x_j) dx_1\ldots x_k\\
&=&\int\limits_{x_1,\ldots,x_k\in[0,1]} \prod_{v_iv_j\in E(F)} f_\sigma(x_i)f_\sigma(x_j) dx_1\ldots x_k\\
&=&\prod_{i=1}^k \int_{[0,1]} f_\sigma(x_i)^{d_i} dx_i=\prod_{i=1}^k M_{d_i}.
\end{eqnarray*}
Similar calculations yield $t(\mf{F},W_\sigma)=\prod_{i=1}^k M_{d_i}$, and so the two graphons do indeed have the exact same generalized moments.

It remains to be shown that $W_\sigma$ and $W_\tau$ yield different probability measures on $\mb{R}$, but this easily follows from their product form, and the fact that $\sigma\neq\tau$.

\bibliographystyle{amsplain}

\end{document}